\documentclass[12pt]{article}
\usepackage[a4paper,margin=1in]{geometry}
\usepackage{amsmath,amssymb,amsthm,mathtools}
\usepackage{xcolor}
\usepackage[colorlinks=true,linkcolor=blue!45!black,citecolor=blue!45!black,urlcolor=blue!45!black]{hyperref}
\newtheorem{theorem}{Theorem}
\newtheorem{lemma}[theorem]{Lemma}
\newtheorem{proposition}[theorem]{Proposition}
\theoremstyle{remark}

\newcommand{\Q}{\mathbb Q}
\newcommand{\R}{\mathbb R}
\newcommand{\C}{\mathbb C}
\newcommand{\Z}{\mathbb Z}

\newcommand{\OK}{\mathcal O_K}
\newcommand{\OF}{\mathcal O_F}
\newcommand{\N}{\operatorname{N}}
\newcommand{\Cl}{\operatorname{Cl}}
\newcommand{\covol}{\operatorname{covol}}
\newcommand{\Span}{\operatorname{span}}
\newcommand{\Res}{\operatorname{Res}}

\newcommand{\doi}[1]{\href{https://doi.org/#1}{\nolinkurl{#1}}}

\title{\Large The existence of small defining polynomials for number fields}

\author{Manjul Bhargava and Sameera Vemulapalli}

\date{\today}

\begin{document}

\maketitle

\begin{abstract}
We prove that every number field of degree $n$ and absolute
discriminant $D$ has a defining integral polynomial of degree $n$ whose
coefficients are all $O_n(\sqrt D)$ in size, thus resolving a 1998 conjecture of Ruppert.

Our proof compares ideal lattices with their trace duals, via which we handle all number fields
with the exception of CM fields whose maximal totally real
subfields have small discriminant. We then treat the remaining fields by
averaging completed Epstein zeta functions over all possible ideal classes
and archimedean scalings,  for which we make critical use of Hecke's formula and the
subconvexity theorem of Michel and~Venkatesh.  

Our proof does not use the Quasi-Riemann Hypothesis. 
\end{abstract}

\section{Introduction}

The purpose of this article is to prove the following theorem:

\vspace{.065in}
\begin{theorem}\label{thm:main}
Let $K$ be a number field of degree $n$ and absolute discriminant $D_K$.
Then there exists a polynomial
\[
 f(x)=a_0x^n+a_1x^{n-1}+\cdots+a_n\in\Z[x]
\]
of degree $n$ with a root $\alpha$ such that $K=\Q(\alpha)$ and
$H(f):=\max \{|a_i|\} \ll_n\sqrt{D_K}$.
\end{theorem}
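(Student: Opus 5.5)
The plan is to produce a primitive element $\alpha$ of small size in a fractional ideal $I$, together with a small integer $a_0$ with $a_0\alpha$ integral-ish, and then read off the coefficients of the minimal polynomial. Concretely, for a fractional ideal $I$ of $\OK$ we take $f(x)=\N(I)^{-1}\prod_{\sigma}(x\,\cdot\,\text{something})$. The cleanest version: if $\alpha\in I$, then $\N(I)^{-1}\prod_\sigma (x-\sigma(\alpha))$ has integer coefficients up to a bounded factor; more precisely, the form $F(x,y)=\N(I)^{-1}\N(x-\alpha y)$ is integral when $1,\alpha\in I$. Its coefficients are bounded by $\N(I)^{-1}\prod_\sigma\max(1,|\sigma\alpha|)$ up to constants. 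So we need $1,\alpha\in I$ with $\alpha$ primitive and $\prod_\sigma\max(1,|\sigma\alpha|)\ll_n \N(I)\sqrt{D_K}$.

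Step one: use Minkowski's geometry of numbers on the lattice $I\subset K\otimes\R$ with archimedean rescalings. The covolume of $I$ is $\N(I)\sqrt{D_K}$ (up to $2^{-r_2}$). Choose $I$ containing $1$, and consider its successive minima $\lambda_1\le\cdots\le\lambda_n$ with respect to the sup-norm after a twisting by a diagonal scaling $t\in(\R_{>0})^{r_1+r_2}$ of determinant $1$. Minkowski gives $\prod\lambda_i\asymp_n \N(I)\sqrt{D_K}$. A primitive element among small combinations of the last vectors has $\max|\sigma\alpha|\ll\lambda_n$, and the product bound requires roughly $\lambda_n\cdot(\text{something})\ll\prod\lambda_i$, i.e. we need the minima to be balanced with $\lambda_1$ attained at $1$ and $1$ small. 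The idea is to choose $I$ and the scaling so that the lattice is ``well rounded'' enough; the obstruction occurs exactly when all small vectors lie in a proper subfield, because then no small primitive element exists.

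Step two: handle the subfield obstruction by comparing $I$ with its trace dual $I^\vee=I^{-1}\mathfrak d_K^{-1}$. Successive minima of a lattice and its dual satisfy $\lambda_i(I)\lambda_{n+1-i}(I^\vee)\asymp_n1$, and $I^\vee$ is again an ideal lattice, so one can switch between them. If the short vectors of $I$ span a subfield $F$, then the dual gives a lattice whose short vectors are controlled by the discriminant of $F$ and the relative discriminant; one derives that a primitive element of the desired size exists unless $K$ is a CM field whose totally real subfield $F$ has small discriminant relative to $D_K$ (where the complex conjugation symmetry makes both $I$ and $I^\vee$ degenerate simultaneously).

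Step three, the main obstacle: the CM case with $D_F$ small. Here I would average over ideal classes $[I]\in\Cl(K)$ and scalings $t$ the count of lattice points of $I$ in a box, expressed through completed Epstein zeta functions $E(I,t,s)$. Averaging over the torus of scalings and the class group turns the Epstein zeta into a combination of Hecke $L$-functions $L(s,\chi)$ of $K$ (Hecke's formula), so the number of $\alpha\in I$ in a suitably shaped region lying outside $F$ equals a main term minus the contribution from $F$, plus an error governed by $L(1/2+it,\chi)$. Using the factorization over $F$ and the Michel--Venkatesh subconvexity bound for these $L$-values (with the conductor of $K/F$ large since $D_F$ is small), the error is smaller than the main term, so some class $I$ and scaling $t$ contain a primitive $\alpha$ with the needed box bound. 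The delicate point is making subconvexity beat the trivial convexity loss uniformly, which is exactly where the smallness of $D_F$ relative to $D_K$ is used.
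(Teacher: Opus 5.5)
Your outline has the same architecture as the paper: norm forms from two short elements of a rescaled ideal lattice, transference between $I$ and $I^\vee=\mathfrak d_K^{-1}I^{-1}$, reduction to CM fields with small $D_F$, then an average of Epstein zeta functions over $\Cl(K)$ and the torus of scalings via Hecke's formula and Michel--Venkatesh. But Step~2 is asserted rather than proved, and the mechanisms you point to would not deliver it.

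\textbf{Why the exceptions are CM.} Transference applied to one lattice only shows that $L$ or $L^*$ has $\lceil n/2\rceil$ short independent vectors. That settles fields without an index-two subfield and nothing more. The CM conclusion comes from the units, not from conjugation symmetry; the coordinatewise conjugation relating $L(I,u)^*$ to $L(I^\vee,u^*)$ exists for every field. The missing argument runs as follows.
\begin{enumerate}
\item If the construction fails for every $I$ and every scaling $u$, transference forces $n=2m$ and a gap $\lambda_m(L(I,u))<\rho<R<\lambda_{m+1}(L(I,u))$ for all $u$, with fixed constants $\rho<R$.
\item Hence the $\Q$-span $W_I$ of the short elements is locally constant in $u$, so it is constant on the connected torus.
\item Multiplying by a unit $\varepsilon$ only replaces $u$ by $(u_v|\sigma_v\varepsilon|)_v$, so $W_I$ is $\OK^\times$-stable.
\item Writing $W_I=\alpha_1E$ gives $\OK^\times\subset E$. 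Dirichlet's unit theorem, $m\le r_1(K)+r_2(K)=r_1(E)+r_2(E)\le m$, then forces $E$ totally real and $K$ totally complex. So $K$ is CM with $E=K^+$ and $\OK^\times=\OF^\times$.
\end{enumerate}

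\textbf{Why $D_F\ll D^\epsilon$.} You rightly flag this bound as essential. Michel--Venkatesh's constant grows polynomially in $D_F$ and there is no base-field-aspect subconvexity, so for instance $D_F\asymp D^{1/2}$ with $d$ bounded would destroy the saving. The bound cannot come from short vectors of $I$ or $I^\vee$. The normalized covolume of the rank-$m$ lattice $I\cap W_I$ satisfies $V^2=2^m\N(J_0)/\sqrt d$, where $J_0$ is what remains of $(\alpha)I^{-1}$ after removing its largest factor extended from $F$; $D_F$ does not appear. The paper's argument is global.
\begin{enumerate}
\item The short $F$-line in \emph{every} ideal gives an integral representative of norm $\le m^m\sqrt d$ in every class of $\Cl(K)/\operatorname{im}\Cl(F)$.
\item Hence $h_K/h_F\ll\sqrt d(\log 2d)^{2m-1}$.
\item The relative class number formula gives $h_K/h_F=2(2\pi)^{-m}\sqrt{D_Fd}\,L_F(1,\eta)$. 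This uses $w_K=w_F$ and $R_K=2^{m-1}R_F$, which come from the unit step above, so the two missing pieces are linked.
\item Stark's and Siegel's residue bounds give $L_F(1,\eta)\gg_\epsilon D^{-\epsilon}$, and comparing with the previous two items yields $D_F\ll D^\epsilon$.
\end{enumerate}

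\textbf{Step~3.} You neither need, nor can control with Hecke's formula, a count of points ``outside $F$'' minus an $F$-contribution. Uniform averaging over classes and scalings produces only $\zeta_K$, not general Hecke characters. It suffices to find one lattice $L(I,u)$ with $\lambda_1>r_m$. The paper does this by showing $Z_L(1/2)\ge1$ whenever $\lambda_1\le r_m$, while the average equals $c_mD^{-1/4}\zeta_K(1/2)/\kappa_K\to0$. Minkowski's second theorem then gives $n$ bounded independent vectors, and hence a generating ratio.
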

\vspace{.025in}

The statement of Theorem~\ref{thm:main} was conjectured by
Ruppert~\cite[Question~2]{Ruppert}. 
%Equivalently, the statement is equivalent to: every number
%field $K$ of degree $n$ should have a generator of absolute
%multiplicative Weil height $O_n(D_K^{1/(2n)})$.
Ruppert himself proved the quadratic case, using an elementary
argument for real quadratic fields and Duke's equidistribution
theorem for Heegner points~\cite{Duke} to handle imaginary quadratic
fields. He also proved the result for totally real fields of
prime degree. Vaaler and Widmer~\cite{VW13}
subsequently proved the conjecture whenever $K$ has
a real embedding. They also
proved it for arbitrary number fields under the generalized
Riemann hypothesis for the Dedekind zeta function of the normal
closure, by applying an effective form of the Chebotarev
density theorem.

Further progress was made by Pierce, Turnage-Butterbaugh,
and Wood \cite{PTBW}, who obtained the bound for almost all
fields in the families covered by their effective Chebotarev
theorems, under the hypotheses of those theorems.
Widmer \cite{WidmerAbelian} proved the conjecture for abelian
number fields of sufficiently large degree and showed that
the exponent $1/2$ in Theorem~\ref{thm:main} cannot be decreased
in any even degree. Akhtari, Vaaler, and Widmer \cite{AVW}
proved the bound for fields containing a non-real root of unity,
with an integral generator, and showed that possible exceptions
must be totally complex fields that are Galois over their
maximal totally real subfields and have no roots of unity
other than $\pm1$.

Recently, and independently, Frei, Sofos, and Widmer
\cite[Corollary~1.7]{FSW}, motivated by the recent work on the 
quasi-Riemann Hypothesis (QRH) by OpenAI~\cite{openai} and its extension to 
Dedekind zeta functions by Cureton and Thorner~\cite[Theorem 1.1]{ct}, proved the bound in Theorem~\ref{thm:main} assuming QRH for
the Dedekind zeta function of the normal closure of $K$. 
Their argument extends the Chebotarev approach of Vaaler and
Widmer: a zero-free half-plane $\Re(s)>1-\delta$, for any fixed
$\delta>0$, suffices. In contrast, our proof does not use~QRH, but instead 
uses the geometry of ideal lattices together with Hecke's formula and the
subconvexity theorem of Michel and Venkatesh \cite{MV}.

\smallskip
Our strategy for proving Theorem~\ref{thm:main} is as follows.
We view each fractional ideal $I$ of $\OK$ as a lattice in $\R^n$ through
the archimedean embeddings and rescale it to have covolume one.
Two elements $\alpha,\beta\in I$ of normalized length at most $B$
give an integral polynomial
$f(x)=\N(\alpha x-\beta)/\N(I)$ of height at most $(2B)^n\sqrt{D_K}$.
We must therefore find such a pair with $B\ll_n1$ and
$\Q(\beta/\alpha)=K$.

The key geometric idea is to use both an ideal lattice and its
dual under the trace pairing. 
%The latter is obtained as 
%$I^\vee=\mathfrak d_K^{-1}I^{-1}$.
The inequalities for a lattice and its dual imply that one of
them contains $\lceil n/2\rceil$ independent vectors of length
at most $2^n/\omega_n$, where $\omega_n$ is the volume of the
unit ball. If~$K$ has no index-two subfield, bounded
integer combinations of these vectors give a ratio generating
$K$ and hence the required polynomial.

Suppose this construction fails for every ideal and every
positive rescaling of the embeddings that preserves volume.
Then $n=2m$, and the short elements of each ideal span a fixed
one-dimensional space over a subfield $F$ of degree $m$.
This space is unchanged by multiplication by any unit of $K$.
Dirichlet's unit theorem then forces $F$ to be totally real
and $K/F$ to be CM. The same restriction on the short elements
gives an integral representative of norm $\ll_n\sqrt d$ in
every class of $\Cl(K)/\operatorname{im}\Cl(F)$, where
$d=D_K/D_F^2$. Counting these representatives and comparing
with the class-number formula and a Brauer--Siegel-type estimate
shows that we must have $D_F\ll_{n,\epsilon}D_K^\epsilon$ for every $\epsilon>0$.
%The small discriminant of $F$ thus follows from the restriction on relative ideal classes.

For such CM extensions, our polynomial $f$ is the normalized norm
of a totally-positive-definite binary quadratic form over $F$:
\[
 Q(x,y)=\N_{K/F}(\alpha x-\beta y)
       =\alpha\bar\alpha x^2
        -(\alpha\bar\beta+\bar\alpha\beta)xy
        +\beta\bar\beta y^2,\qquad
 f(x)=\frac{\N_{F/\Q}(Q(x,1))}{\N(I)}.
\]
Here the bar denotes the nontrivial automorphism of $K/F$.
When $\beta/\alpha$ generates $K$, the form $Q$ is positive
definite at every real embedding of $F$. Bounds for the
normalized lengths of $\alpha$ and $\beta$ bound the
coefficients of these conjugate quadratic forms, and taking
their product then gives the desired bound for $H(f)$.

The familiar case $F=\Q$ illustrates the geometry. A reduced
positive definite form $ax^2+bxy+cy^2$ of discriminant $-D$
satisfies $|b|\leq a\leq c$ and $4ac=b^2+D$.
If $a\geq c_0\sqrt D$ for a fixed $c_0>0$, then
$a\leq\sqrt{D/3}$ and $c\leq\sqrt D/(3c_0)$, so all its
coefficients are $O_{c_0}(\sqrt D)$.
For general $F$, we use the corresponding lattice criterion.
If a covolume-one lattice has shortest-vector length at least
$r>0$, Minkowski's second theorem bounds its successive minima by
\[
 r^{n-1}\lambda_n
 \leq\lambda_1\cdots\lambda_n\leq\frac{2^n}{\omega_n}.
\]
It therefore has $n$ independent vectors of length at most
$2^n/(\omega_n r^{n-1})$. We seek an ideal and a scaling for
which this holds with $r=r_m>0$ depending only on $m$.

To show that these exist, we use the completed Epstein zeta function $Z_L$.
Poisson summation shows that $Z_L(1/2)\geq1$ whenever the
normalized lattice $L$ has a nonzero vector of length at most
an explicit $r_m$. We average this central value, giving each
ideal class equal weight and integrating over volume-preserving
scalings modulo units with probability measure.
Hecke's formula evaluates the average as
$c_mD_K^{-1/4}\zeta_K(1/2)/\kappa_K$, where $c_m>0$ depends
only on $m$ and $\kappa_K=\Res_{s=1}\zeta_K(s)$.

The subconvexity theorem of Michel and Venkatesh \cite{MV} gives
$|\zeta_K(1/2)|\ll_m D_F^a d^{1/4-\delta}$ for some
$a\geq0$ and $\delta>0$ depending only on $m$.
The saving $\delta$ is essential here: it produces a negative
power of $D_K$ after multiplication by $D_K^{-1/4}$.
Michel and Venkatesh prove that the constant in their estimate
depends at most polynomially on $D_F$ when $[F:\Q]$ is fixed.
Since the relative class-group argument above gives
$D_F\ll_{n,\epsilon}D_K^\epsilon$, and the residue estimate
gives $\kappa_K^{-1}\ll_{n,\epsilon}D_K^\epsilon$, these factors
can be absorbed into the power saving. The average consequently
tends to zero. For sufficiently large $D_K$, some lattice in
the average must then have shortest-vector length greater than
$r_m$, completing the construction.

\smallskip

This paper is organized as follows.  In Section~\ref{sec:polynomial}, we establish integrality and
coefficient bounds for the norm polynomial and explain how to
choose a generating ratio.
In Section~\ref{sec:CM}, we compare an ideal lattice with its
dual and prove the CM reduction.
In Section~\ref{sec:base}, we count relative ideal classes to
bound $D_F$.
In Section~\ref{sec:forms}, we give the quadratic-form
interpretation and the short-vector criterion for $Z_L(1/2)$.
In Section~\ref{sec:uniform}, we apply Hecke's formula
\cite{Hecke,Khayutin} and Michel--Venkatesh
\cite[Theorem~1.2]{MV} to bound the average.
Section~\ref{sec:conclusion} combines these estimates to prove
Theorem~\ref{thm:main}.

\section{Polynomials from short ideal elements}\label{sec:polynomial}

The case $n=1$ follows by taking $f(x)=x$. Henceforth let $n\geq2$,
write $D=D_K$, and put $M=\binom n2$.
By a defining polynomial for $K$, we mean an integer polynomial
of degree $n$ with a root that generates $K$ over $\Q$.

Let $r_1$ be the number of real embeddings of $K$ and $r_2$ the
number of conjugate pairs of complex embeddings, so that
$n=r_1+2r_2$. Choose one embedding from each complex pair and write
\[
 \sigma:K\longrightarrow\R^{r_1}\times\C^{r_2}.
\]
We identify each copy of $\C$ with $\R^2$ and use the usual Euclidean
metric. Thus the embeddings themselves, without any additional
factors, realize each fractional ideal $I$ as a lattice in $\R^n$.
Write $\|\alpha\|=\|\sigma(\alpha)\|$, and write $\N(I)$ for the
absolute norm of an ideal and $\N(\alpha)=\N_{K/\Q}(\alpha)$ for
the norm of an element. Put $d_v=1$ at a real place and $d_v=2$
at a complex place. Then
\[
 \covol(\sigma(I))=2^{-r_2}\sqrt D\,\N(I).
\]

\begin{lemma}\label{boundlemma}
For any fractional ideal $I$ of $\OK$ and any two elements
$\alpha,\beta\in I$, write
\[
 F_{I;\alpha,\beta}(x,y)
 =\frac{\N(\alpha x+\beta y)}{\N(I)}
 =\sum_{i=0}^n c_i x^iy^{n-i}.
\]
Then $F_{I;\alpha,\beta}$ is integral, and
\[
 |c_i|\leq\binom ni\frac{\|\alpha\|^i\|\beta\|^{n-i}}{\N(I)}.
\]
Moreover, its coefficient height satisfies
\begin{equation}\label{eq:binary-bound}
 H(F_{I;\alpha,\beta})
 \leq\frac{2^n}{\N(I)}
       \prod_v\max\{|\sigma_v(\alpha)|,|\sigma_v(\beta)|\}^{d_v}.
\end{equation}
\end{lemma}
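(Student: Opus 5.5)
Integrality comes from a local valuation argument; the coefficient bounds come from expanding the norm as a product over embeddings.

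\emph{Integrality.} It suffices to show that $\N(\alpha x+\beta y)\in\N(I)\Z$ for all integers $x,y$, and then pass from values to coefficients. Rather than do this, I will prove directly that the form $\N(\alpha x+\beta y)/\N(I)$ has integer coefficients. Choose a finite extension $L/K$, for instance the Hilbert class field, or more simply a field in which $I\mathcal O_L=\gamma\mathcal O_L$ is principal. Write $\alpha=\gamma\alpha'$ and $\beta=\gamma\beta'$ with $\alpha',\beta'\in\mathcal O_L$. Then
\[
\N_{K/\Q}(\alpha x+\beta y)=\prod_{\tau}\tau(\alpha x+\beta y),
\]
where $\tau$ runs over the embeddings of $K$. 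The coefficients of this product lie in $\Z$ and are elementary symmetric expressions. Working prime by prime is cleaner. For each prime $p$, the coefficients are symmetric functions of the conjugates. Over the Galois closure, each $\tau(\alpha)$ and $\tau(\beta)$ lies in $\tau(I)$. Hence the coefficient $c_i$ lies in $\prod_\tau\tau(I)\cdot\mathcal O_{\tilde K}=\N(I)\mathcal O_{\tilde K}$. The last equality is the standard fact that the product of the conjugates of an ideal, extended to the Galois closure, equals its norm times the ring of integers. Since $c_i\in\Z$ and $\N(I)\mathcal O_{\tilde K}\cap\Q=\N(I)\Z$, we conclude that $\N(I)\mid c_i$. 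This is the step needing the most care: the identity $\prod_\tau \tau(I)\mathcal O_{\tilde K}=\N(I)\mathcal O_{\tilde K}$. One justifies it by noting that $\N_{K/\Q}$ of ideals is computed by $\N_{\tilde K/\Q}$ of the extension and then taking the appropriate root, or prime by prime via $\prod_{\tau}\tau(\mathfrak p)=\prod_{\mathfrak P|p}\dots=p^{f}$.

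\emph{Coefficient bounds.} Expand
\[
\N(\alpha x+\beta y)=\prod_{\text{real }v}(\sigma_v(\alpha)x+\sigma_v(\beta)y)\prod_{\text{complex }v}|\sigma_v(\alpha)x+\sigma_v(\beta)y|^2,
\]
and view this as a product of $n$ linear factors $a_jx+b_jy$, where complex places contribute a conjugate pair. The coefficient of $x^iy^{n-i}$ is $\sum_{|S|=i}\prod_{j\in S}a_j\prod_{j\notin S}b_j$. By the triangle inequality, this has absolute value at most $\sum_{|S|=i}\prod_{j\in S}|a_j|\prod_{j\notin S}|b_j|$.

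For the first bound, use $|a_j|\le\|\alpha\|$ and $|b_j|\le\|\beta\|$, since each embedding coordinate is at most the Euclidean norm. This gives $|c_i|\le\binom ni\|\alpha\|^i\|\beta\|^{n-i}/\N(I)$.

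For \eqref{eq:binary-bound}, bound $|a_j|$ and $|b_j|$ instead by $m_j=\max\{|a_j|,|b_j|\}$. Then $|c_i|\N(I)\le\binom ni\prod_j m_j\le 2^n\prod_v\max\{|\sigma_v(\alpha)|,|\sigma_v(\beta)|\}^{d_v}$, because $\binom ni\le 2^n$ and each complex place contributes two factors.
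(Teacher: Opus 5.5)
Your proof is correct. The coefficient bounds are exactly the paper's argument: expand the norm as a product of $n$ linear factors, apply the triangle inequality, and bound each conjugate either by $\|\alpha\|$ and $\|\beta\|$ or by their maximum, using $\binom ni\le 2^n$.

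For integrality you take a genuinely different route. The paper writes $F_{I;\alpha,\beta}$, up to sign, as the determinant of the matrix of multiplication by $\alpha x+\beta y$ from $\OK[x,y]$ to $I[x,y]$, taken in integral bases of $\OK$ and $I$. That matrix has entries in $\Z[x,y]$. Its determinant is $\pm\N(\alpha x+\beta y)/\N(I)$ because the change of basis from $\OK$ to $I$ has determinant $\pm\N(I)$. This is pure linear algebra over $\Z$, and it handles fractional ideals with no extra work.

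You instead pass to the Galois closure and use $\prod_\tau\tau(I)\mathcal O_{\tilde K}=\N(I)\mathcal O_{\tilde K}$. Your justification of this identity is correct: use $\N_{\tilde K/\Q}(I\mathcal O_{\tilde K})=\N(I)^{[\tilde K:K]}$ and then extract the root by unique factorization. This costs more machinery. In return it gives a slightly finer fact: each individual monomial $\prod_{\tau\in S}\tau(\alpha)\prod_{\tau\notin S}\tau(\beta)$, not only their symmetric sums, is divisible by $\N(I)$ in $\mathcal O_{\tilde K}$.

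When you write it up, fix these points:
\begin{itemize}
\item Delete the two abandoned openings. Integer values alone would not suffice, since an integer-valued form such as $xy(x+y)/2$ need not have integral coefficients. The principalization idea is never used.
\item For fractional $I$, the coefficients of $\N(\alpha x+\beta y)$ lie only in $\Q$, not in $\Z$. That is all your step $\N(I)\mathcal O_{\tilde K}\cap\Q=\N(I)\Z$ needs.
\item Keep the coefficients of $\N(\alpha x+\beta y)$, which equal $\N(I)c_i$, notationally separate from the $c_i$ themselves.
\end{itemize}
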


\begin{proof}
Multiplication by $\alpha x+\beta y$ defines a map from
$\OK[x,y]$ to $I[x,y]$. With respect to integral bases of $\OK$
and $I$, its matrix has entries in $\Z[x,y]$, and its determinant
is $F_{I;\alpha,\beta}$ up to sign. This yields integrality.

The norm is the product of $\sigma(\alpha)x+\sigma(\beta)y$ over
all $n$ embeddings $\sigma:K\hookrightarrow\C$. The coefficient
of $x^iy^{n-i}$ is therefore a sum of $\binom ni$ products,
each with $i$ conjugates of $\alpha$ and $n-i$ conjugates of
$\beta$. Every conjugate is bounded in absolute value by the
Euclidean length of the corresponding element. This proves
the first bound. Bounding each factor instead by
$\max\{|\sigma(\alpha)|,|\sigma(\beta)|\}$ gives
\eqref{eq:binary-bound}.
\end{proof}

We will also allow positive changes of scale in the archimedean
coordinates. For $u=(u_v)_v$ with $\prod_vu_v^{d_v}=1$, put
\begin{equation}\label{eq:lattice}
 t_I=(2^{-r_2}\sqrt D\,\N(I))^{-1/n},
 \qquad L(I,u)=t_Iu\sigma(I).
\end{equation}
This lattice has covolume one. We call
$\|t_Iu\sigma(\alpha)\|$ the normalized length of $\alpha\in I$.
If $\alpha$ and $\beta$ have normalized lengths at most $B$,
Lemma~\ref{boundlemma} gives
\begin{equation}\label{eq:norm}
 \Phi_{I;\alpha,\beta}(x)
 :=\frac{\N(\alpha x-\beta)}{\N(I)}\in\Z[x],
 \qquad H(\Phi_{I;\alpha,\beta})\leq(2B)^n\sqrt D.
\end{equation}
Indeed, the factors $u_v$ cancel in the product in
\eqref{eq:binary-bound}, and
$t_I^{-n}/\N(I)=2^{-r_2}\sqrt D$.

We next explain how to choose a short numerator and denominator
whose ratio generates $K$.

\begin{lemma}\label{lem:ratios}
Let $\alpha_1,\ldots,\alpha_k\in I$, with $\alpha_1\ne0$ and
$k\leq n$, have normalized lengths at most~$R$. Suppose that
$\Q(\alpha_2/\alpha_1,\ldots,\alpha_k/\alpha_1)=K$.
Then there is an integer linear combination $\beta=\sum_i a_i\alpha_i$,
with $0\leq a_i\leq M$, such that $\beta/\alpha_1$ generates $K$.
In particular, there is a defining polynomial $f$ for $K$ with
\begin{equation}\label{eq:ratios}
 H(f)\leq(2nMR)^n\sqrt D.
\end{equation}
\end{lemma}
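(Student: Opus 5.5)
The plan is the standard primitive-element argument, made effective by counting bad coefficient choices. Put $\gamma_i=\alpha_i/\alpha_1$, so $\gamma_1=1$ and $K=\Q(\gamma_2,\ldots,\gamma_k)$. For $a=(a_1,\ldots,a_k)$ the ratio $\beta/\alpha_1=\sum_i a_i\gamma_i$ depends on $a_1$ only through a rational shift, which does not change the field it generates. So the task is to choose $(a_2,\ldots,a_k)\in\{0,\ldots,M\}^{k-1}$ with $\theta=\sum_{i\ge2}a_i\gamma_i$ generating $K$; we may simply take $a_1=0$.

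Let $\sigma_1,\ldots,\sigma_n$ be the $n$ embeddings $K\hookrightarrow\C$. Then $\theta$ generates $K$ exactly when the values $\sigma_j(\theta)$ are pairwise distinct. For each pair $j<l$, consider the linear form
\[
\ell_{jl}(a)=\sum_{i\ge2}a_i\bigl(\sigma_j(\gamma_i)-\sigma_l(\gamma_i)\bigr).
\]
It is not identically zero. Otherwise $\sigma_j$ and $\sigma_l$ would agree on all $\gamma_i$, hence on $K=\Q(\gamma_2,\ldots,\gamma_k)$, contradicting $\sigma_j\neq\sigma_l$.

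There are $M=\binom n2$ such forms. A nonzero linear form in $k-1$ variables vanishes on at most $(M+1)^{k-2}$ points of the box $\{0,\ldots,M\}^{k-1}$: fix a variable with nonzero coefficient, and for each choice of the others at most one value of that variable works. In total at most $M(M+1)^{k-2}<(M+1)^{k-1}$ points of the box are bad, so a good $a$ exists. Equivalently, apply Combinatorial Nullstellensatz/Schwartz--Zippel to $\prod_{j<l}\ell_{jl}$, which is nonzero of degree $M$, on a grid with $M+1$ points in each coordinate. The case $k=1$ forces $K=\Q$ and $n=1$, which is trivial.

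For the height bound, $\beta=\sum a_i\alpha_i$ lies in $I$ and has normalized length at most $(k-1)MR\le nMR$ by the triangle inequality. Also $\alpha_1$ has normalized length at most $R\le nMR$. Applying \eqref{eq:norm} with $B=nMR$ to $\Phi_{I;\alpha_1,\beta}$ gives $H\le(2nMR)^n\sqrt D$. This polynomial has the root $\beta/\alpha_1$, which generates $K$. Its degree is exactly $n$, since the leading coefficient is $\N(\alpha_1)/\N(I)\neq0$. So $f=\Phi_{I;\alpha_1,\beta}$ works. No step is a real obstacle; the only care needed is the counting bound ensuring that $M+1$ values per coordinate suffice.
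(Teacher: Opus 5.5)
Your proof is correct and is essentially the paper's argument: the paper also shows that each of the $M$ nonzero linear forms $\sum_i a_i\bigl(\sigma(\alpha_i/\alpha_1)-\tau(\alpha_i/\alpha_1)\bigr)$ vanishes at few points of the box $\{0,\ldots,M\}^k$, since one coordinate is determined by the others, and then applies \eqref{eq:norm} to $\Phi_{I;\alpha_1,\beta}$ with $B=nMR$. Your choice $a_1=0$ changes nothing, because the $a_1$-coefficient $\sigma(1)-\tau(1)$ in the paper's forms is zero anyway, and the case $k=1$ cannot occur under the standing assumption $n\geq 2$.
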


\begin{proof}
For each pair of distinct embeddings $\sigma,\tau:K\hookrightarrow\C$,
the equation
\[
 \sum_{i=1}^k a_i
 \bigl(\sigma(\alpha_i/\alpha_1)-\tau(\alpha_i/\alpha_1)\bigr)=0
\]
is a nonzero linear equation in the $a_i$; otherwise $\sigma$
and $\tau$ would agree on generators of $K$.
Each such equation contains at most $(M+1)^{k-1}$ points of
$\{0,\ldots,M\}^k$: after fixing all but a coordinate with nonzero
coefficient, there is at most one choice for that coordinate.
There are $M$ pairs of embeddings, so their union does not exhaust
this set.

Choose $(a_1,\ldots,a_k)$ outside the union and put
$\beta=\sum_i a_i\alpha_i$. Then all conjugates of
$\beta/\alpha_1$ are distinct, so this ratio generates $K$.
The normalized length of $\beta$ is at most $nMR$.
Taking $f=\Phi_{I;\alpha_1,\beta}$ in \eqref{eq:norm} proves
\eqref{eq:ratios}.
\end{proof}

For a Euclidean lattice $L$, let $\lambda_j(L)$ be its $j$th
successive minimum, i.e., the least radius of a ball centered at zero
containing $j$ independent lattice vectors. Independence of
ideal elements over $\Q$ is equivalent to independence of their
Minkowski vectors over $\R$, since their coordinates in an
integral basis are rational.

\section{Reduction to CM fields}\label{sec:CM}

We first prove the theorem when $K$ has no index-two subfield.
The proof uses only an ideal of $\OK$ and its dual. We then
vary the archimedean scalings to identify the fields with index-2 subfields
for which this construction can fail.

For later use, we record the relevant inequalities for a lattice and its dual.
Let $\omega_n$ be the volume of the Euclidean unit ball and
put $T_n=(2^n/\omega_n)^2$. If
$L^*=\{w:\langle w,L\rangle\subset\Z\}$ is the Euclidean dual of a
full lattice $L$, then
\begin{equation}\label{eq:transference}
 1\leq\lambda_j(L)\lambda_{n+1-j}(L^*)\leq T_n
 \qquad(1\leq j\leq n).
\end{equation}
To prove the lower bound, suppose that the product were less than
one. The integer inner products between $j$ independent short vectors
in $L$ and $n+1-j$ independent short vectors in $L^*$ would all have
absolute value less than one, and hence would vanish. Their spans
would then be orthogonal, although their dimensions sum to $n+1$,
which is impossible. For the upper bound, Minkowski's second theorem
applied to $L$ and $L^*$ bounds the product of all $n$ quantities in
\eqref{eq:transference} by $T_n$. Each quantity is at least one, so
each is also at most $T_n$.

Let $\mathfrak d_K$ be the different ideal of $\OK$, let $C$ conjugate each
complex coordinate, and let $A$ multiply the $v$th coordinate by
$d_v$. With the ordinary Euclidean inner product, we have
\[
 \langle\sigma(\alpha),AC\sigma(\beta)\rangle
       =\operatorname{Tr}_{K/\Q}(\alpha\beta).
\]
Thus $\sigma(I)^*=AC\sigma(\mathfrak d_K^{-1}I^{-1})$.
Since $\N(\mathfrak d_K)=D$, the normalized dual is
\begin{equation}\label{eq:duality}
 L(I,u)^*=C L(\mathfrak d_K^{-1}I^{-1},u^*),\qquad
 u_v^*=2^{-2r_2/n}d_vu_v^{-1}.
\end{equation}
Indeed, the normalizing scalars for $I$ and
$\mathfrak d_K^{-1}I^{-1}$ have product $2^{2r_2/n}$.
Moreover,
\[
 \prod_v(u_v^*)^{d_v}=2^{-2r_2}\prod_vd_v^{d_v}=1.
\]
Hence the dual is isometric to another lattice in our family.
The map $C$ acts on the ambient real space; it need not arise from
an automorphism of $K$.

\begin{proposition}\label{thminmostcases}
Theorem~$\ref{thm:main}$ holds for every number field $K$ with no
index-two subfield.
\end{proposition}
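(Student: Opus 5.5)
We will work with the single ideal $I=\OK$, the trivial scaling $u=(1,\ldots,1)$, and the normalized lattice $L=L(\OK,u)$ together with its dual. By \eqref{eq:duality}, $L^*$ is isometric to $L'=L(\mathfrak d_K^{-1},u^*)$, which is again a normalized ideal lattice in our family. Put $k=\lceil n/2\rceil$ and $j=n+1-k=\lfloor n/2\rfloor+1$; then $j\ge k$. The upper bound in \eqref{eq:transference}, applied at index $k$ (or at $j$), gives $\lambda_k(L)\lambda_{j}(L^*)\le T_n$. Since $j\ge k$, we have $\lambda_k(L^*)\le\lambda_j(L^*)$, and therefore
\[
\min\{\lambda_k(L),\lambda_k(L^*)\}\le\sqrt{T_n}=2^n/\omega_n .
\]
So one of the two ideal lattices, either $\OK$ or $\mathfrak d_K^{-1}$ (up to the isometry $C$, which preserves lengths), contains $k$ elements $\alpha_1,\ldots,\alpha_k$ that are linearly independent over $\Q$ and have normalized length at most $R=2^n/\omega_n$. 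Here we use the remark at the end of Section~\ref{sec:polynomial} that independence in $\R^n$ is the same as independence over $\Q$.

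Next we show that $\Q(\alpha_2/\alpha_1,\ldots,\alpha_k/\alpha_1)=K$. Write $E$ for this field. The elements $1,\alpha_2/\alpha_1,\ldots,\alpha_k/\alpha_1$ all lie in $E$ and are $\Q$-independent, because the $\alpha_i$ are. Hence $[E:\Q]\ge k\ge n/2$. Since $[E:\Q]$ divides $n$, the only possibilities are $E=K$ or $[E:\Q]=n/2$, and the latter means $E$ is an index-two subfield. By hypothesis $K$ has no such subfield, so $E=K$.

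Finally, Lemma~\ref{lem:ratios} applied to these $\alpha_i$ in the relevant ideal produces a defining polynomial with $H(f)\le(2nM\cdot 2^n/\omega_n)^n\sqrt D\ll_n\sqrt D$. This proves the proposition.

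The only delicate point is the index bookkeeping in \eqref{eq:transference}: we need the same count $k$ of short vectors to work for both $L$ and $L^*$, and this is exactly where the parity of $n$ enters. When $n$ is odd, $k=(n+1)/2$ already exceeds $n/2$, so the case $[E:\Q]=n/2$ cannot occur at all and the index-two hypothesis is vacuous. When $n$ is even, we get only $n/2$ short vectors, and the hypothesis is genuinely needed. This also explains why the later sections must handle fields with index-two subfields, and in particular CM fields, by other means.
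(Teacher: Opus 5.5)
Your proof is correct and follows essentially the same route as the paper: transference at index $k=\lceil n/2\rceil$ gives $k$ short independent vectors in $L(\OK,1)$ or in its dual (hence in $\mathfrak d_K^{-1}$), the degree count forces $E=K$, and Lemma~\ref{lem:ratios} finishes. Your explicit step $\lambda_k(L^*)\le\lambda_{n+1-k}(L^*)$ is what the paper compresses into ``since $k\le n+1-k$,'' and your divisibility argument ($[E:\Q]\ge n/2$ and $[E:\Q]\mid n$) is equivalent to the paper's observation that every proper subfield has degree at most $n/3$.
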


\begin{proof}
Let $I$ be a fractional ideal of $\OK$, and put $k=\lceil n/2\rceil$.
Since $k\leq n+1-k$, transference gives
\[
 \lambda_k(L(I,1))\lambda_k(L(I,1)^*)\leq T_n.
\]
Thus one of these lattices contains $k$ independent vectors of
length at most $\sqrt{T_n}$. By \eqref{eq:duality}, they correspond
to elements $\alpha_1,\ldots,\alpha_k$ of either $I$ or
$I^\vee=\mathfrak d_K^{-1}I^{-1}$, with bounded normalized lengths.

The field
\[
 E=\Q(\alpha_2/\alpha_1,\ldots,\alpha_k/\alpha_1)
\]
contains the $k$ independent ratios $\alpha_i/\alpha_1$.
Hence $[E:\Q]\geq k$. Since $K$ has no index-two subfield,
every proper subfield has degree at most $n/3<k$.
It follows that $E=K$, and Lemma~\ref{lem:ratios} completes the proof.
\end{proof}

Taking $I=\OK$ allows us to use only the ring of integers and the inverse
different. However, the same proof applies to any fractional ideal $I$
and its dualizing module $I^\vee$.

To treat fields with an index-two subfield, we attempt to use the same
construction for every $I$ and every scaling $u$. Its failure for every $I$ and $u$  
forces a gap in the successive minima. As we now show, this gap
determines a subfield line that is invariant under every unit.
Recall that a CM field is a totally complex quadratic extension
of a totally real field.

\begin{proposition}\label{prop:reduction}
There exists a constant $C_0(n)$ with the following property.
If $K$ has no defining polynomial of height at most $C_0(n)\sqrt D$,
then $n=2m$ and $K$ is a CM field with maximal totally real subfield
$F=K^+$. Moreover, $\OK^\times=\OF^\times$.
For each nonzero fractional ideal $I$, there is a one-dimensional
$F$-subspace $W_I$ of $K$ such that, for every scaling $u$ as in
\eqref{eq:lattice}, the lattice $I\cap W_I$ contains $m$ independent
elements of normalized length~$<\sqrt{2m}$.
\end{proposition}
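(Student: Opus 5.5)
We argue by contraposition and reuse the argument of Proposition~\ref{thminmostcases} for every ideal and every scaling. Fix $I$ and a scaling $u$, and put $k=\lceil n/2\rceil$. By \eqref{eq:transference} and \eqref{eq:duality}, either $L(I,u)$ or $L(I^\vee,u^*)$ contains $k$ independent vectors of normalized length at most $\sqrt{T_n}$. If the ratios of the corresponding elements generate $K$, Lemma~\ref{lem:ratios} gives a defining polynomial of height $\le(2nM\sqrt{T_n})^n\sqrt D$. So we take $C_0(n)$ somewhat larger than this, chosen to also cover the second step below.

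Under the hypothesis, the field $E$ generated by those ratios is always a proper subfield of degree $\ge k$. Hence $E$ has index two, $n=2m$, and $k=m$. The $m$ short elements then lie in $\alpha_1E$, which is an $m$-dimensional $\Q$-space and so is spanned by them. If $\lambda_{m+1}$ of this lattice were also bounded, an $(m+1)$-st short element would leave $\alpha_1E$. Then the ratios would span more than $E$ and generate $K$, giving a polynomial of bounded height.

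We therefore aim for a gap. First apply transference to the pair $\lambda_m(L)$, $\lambda_{m+1}(L^*)$, and deduce that whichever lattice has small $\lambda_m$ has $\lambda_{m+1}$ huge, say exceeding a large constant $R_n$. Then Minkowski's second theorem, with all of $\lambda_1,\dots,\lambda_m$ small and $\lambda_{m+1}\cdots\lambda_n\ge R_n^{m}$, forces $\lambda_1\cdots\lambda_m$ to be tiny. This lets us improve the length bound for the $m$ vectors in $W=\alpha_1E$ to below $\sqrt{2m}$, at the cost of enlarging $C_0$. The subspace $W$ is canonical, namely the span of all vectors of length $<\lambda_{m+1}$.

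The main obstacle is showing that $W_I$ and $E$ do not depend on $u$ and on the choice between $I$ and $I^\vee$. We would first try continuity. The map $u\mapsto W$ is locally constant wherever the gap persists, the gap persists on the connected space of scalings, and $K$ has only finitely many subfields of degree $m$. The lattices from $I$ and from $I^\vee$ should be matched by the trace-orthogonal complement: if $W\subset I^\vee$ is the short space, then $W^{\perp}\cap I$ has rank $m$. Because of the gap, that complement should carry the bounded vectors of $I$ for the complementary scaling. We must also rule out the short space sitting in $I^\vee$ for some $u$ and in $I$ for others. Here we would use continuity plus the gap to show that the side carrying the short space cannot switch.

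Next, multiplication by a unit $\varepsilon$ maps $I$ to $I$ and sends $L(I,u)$ to $L(I,u|\sigma(\varepsilon)|)$. Hence $\varepsilon W_I=W_I$, which means $\varepsilon\in E$ with $E=F$ fixed. So $\OK^\times\subset\OF^\times$, i.e.\ the two unit groups have equal rank. Dirichlet's theorem, with rank $r_1+r_2-1$ for $K$, then forces $F$ to be totally real and $K$ totally complex, so $K$ is CM with $F=K^+$. Equality $\OK^\times=\OF^\times$ follows, and not merely finite index, because torsion units outside $\pm1$ also lie in $F$ and are therefore real.

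Finally, if the short space of the $I$-lattice lives in $I^\vee$, we replace $W_I$ by the corresponding line for $I$. This uses $I^\vee=\mathfrak d_K^{-1}I^{-1}$ together with the symmetric roles of $I$ and $I^\vee$, so that in the statement the short space always lies in $I$ itself.
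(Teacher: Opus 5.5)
There is a genuine gap in the step meant to produce the gap in the successive minima and the bound $\sqrt{2m}$, and this gap is what creates your ``main obstacle.''

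First, Minkowski's second theorem with $\lambda_{m+1}\cdots\lambda_n\geq R_n^m$ only makes the product $\lambda_1\cdots\lambda_m$ small. A small product is compatible with $\lambda_1$ tiny and $\lambda_m$ near the a priori bound $\sqrt{T_n}=2^n/\omega_n$, which already exceeds $\sqrt{2m}$ when $n=4$. So the individual length bound in the statement does not follow.

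Second, transference pairs $\lambda_m(L)$ with $\lambda_{m+1}(L^*)$. Used in your direction it only gives $\lambda_{m+1}(L^*)\geq1/\lambda_m(L)$, which is not large and concerns the other lattice.

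Third, the ``either $L(I,u)$ or $L(I^\vee,u^*)$'' alternative is never resolved. Continuity, trace-orthogonal complements and ``symmetric roles'' are only sketched with ``should,'' so the final replacement of the line for $I^\vee$ by one for $I$ is unjustified.

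The missing observation is that your own second-paragraph argument does not care which lattice was short. For \emph{every} ideal $J$ and scaling $v$: if $L(J,v)$ had $m+1$ independent vectors of normalized length at most $R$, their ratios to the first would generate a field of degree $>n/2$, hence $K$. Lemma~\ref{lem:ratios} would then give height at most $(2nMR)^n\sqrt D$. So with $C_0(n)=(2nMR)^n$, every member of the family has $\lambda_{m+1}>R$.

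By \eqref{eq:duality}, $L(I,u)^*$ is isometric to $L(I^\vee,u^*)$, so $\lambda_{m+1}(L(I,u)^*)>R$. Transference then gives
$\lambda_m(L(I,u))\leq T_n/\lambda_{m+1}(L(I,u)^*)<T_n/R$
for $I$ itself and every $u$. Choose $R>\max\{\sqrt{T_n},T_n/\sqrt n\}$. This yields $\lambda_m(L(I,u))<T_n/R<\sqrt{2m}$ together with $T_n/R<R<\lambda_{m+1}(L(I,u))$.

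Hence the short space always lies in $I$, and no switching between $I$ and $I^\vee$ can occur. The $\Q$-span of elements of normalized length $<R$ has dimension exactly $m$ for every $u$. It is locally constant, hence constant on the connected set of scalings. No orthogonal-complement matching is needed. From there, your unit-invariance and Dirichlet argument goes through as in the paper.
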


\begin{proof}
Choose
$R>\max\{1,\sqrt{T_n},T_n/\sqrt n\}$, put $\rho=T_n/R$, and set
$C_0(n)=(2nMR)^n$. Suppose now that $K$ has no defining polynomial of
height at most $C_0(n)\sqrt D$. We claim that
\[
 \lambda_{\lfloor n/2\rfloor+1}(L(I,u))>R
 \qquad\text{for every }I\text{ and }u.
\]
Indeed, if there were $\lfloor n/2\rfloor+1$ independent elements of
normalized length at most $R$, their ratios to the first would remain
independent over $\Q$. The field generated by these ratios would
therefore have degree greater than $n/2$. Since every proper subfield
of $K$ has degree at most $n/2$, these ratios would generate $K$,
contrary to Lemma~\ref{lem:ratios}.

If $n$ were odd, put $j=(n+1)/2$, so that $j=n+1-j$.
The claim applied to both $L(I,u)$ and its dual would give
$\lambda_j(L(I,u))\lambda_{n+1-j}(L(I,u)^*)>R^2>T_n$,
contrary to \eqref{eq:transference}.
We must therefore have $n=2m$. Applying the claim to the dual and
using \eqref{eq:transference} again yields
\begin{equation}\label{eq:gap}
 \lambda_m(L(I,u))<\rho<R<\lambda_{m+1}(L(I,u)),
 \qquad \rho<\sqrt n.
\end{equation}

Fix $I$ and consider the rational space
\[
 W(I,u)=\Span_\Q\{\alpha\in I:\|t_Iu\sigma(\alpha)\|<R\}.
\]
By \eqref{eq:gap}, this space has dimension exactly $m$ for every $u$.
A basis chosen from its short elements remains short when $u$ is
varied sufficiently little. The dimension remains $m$, so the space
itself is locally constant. The set of allowed $u$ is connected,
as is clear after taking logarithms. Hence $W(I,u)$ is independent
of $u$; write it as $W_I$.

Choose a short basis $\alpha_1,\ldots,\alpha_m$ of $W_I$, and let
$E=\Q(\alpha_2/\alpha_1,\ldots,\alpha_m/\alpha_1)$.
Lemma~\ref{lem:ratios} implies that $E\ne K$. On the other hand,
the $m$ independent ratios imply $[E:\Q]\geq m$. It follows that
$[E:\Q]=m$ and $W_I=\alpha_1E$.

For a unit $\varepsilon\in\OK^\times$, the tuple
$(u_v|\sigma_v\varepsilon|)_v$ is another allowed scaling.
Multiplication by $\varepsilon$ carries the short elements for this
new scaling bijectively to the short elements for $u$. Since $W_I$
does not depend on the scaling, we have $\varepsilon W_I=W_I$.
The equality $W_I=\alpha_1E$ now implies $\varepsilon\in E$.
Thus $\OK^\times=\mathcal O_E^\times$, and Dirichlet's unit theorem gives
\[
 m\leq r_1(K)+r_2(K)
   =r_1(E)+r_2(E)\leq [E:\Q]=m.
\]
Equality throughout forces $K$ to be totally complex and $E$ to be
totally real.

Finally, in any fixed embedding $K\subset\C$, a real element of $K$
outside $E$ would generate the quadratic extension $K/E$. This would
make $K$ a subfield of $\R$, a contradiction. Hence
$E=K\cap\R=K^+$, independently of $I$. Writing this field as $F$,
we obtain the asserted $F$-line $W_I$ and the equality
$\OK^\times=\OF^\times$. The bound on the lengths of $m$ independent
elements of $I\cap W_I$ follows from \eqref{eq:gap}.
\end{proof}

\section{The discriminant of the base field}\label{sec:base}

The preceding proposition restricts the possible exceptions to CM
fields. We now obtain a further restriction: the discriminant of the
totally real subfield must be smaller than every fixed positive power
of $D$. The proof compares an upper bound for a relative class group,
obtained by counting ideals, with the analytic class-number formula.

\begin{proposition}\label{prop:small-base}
Under the failure hypothesis of Proposition~\ref{prop:reduction},
the maximal totally real subfield $F=K^+$ satisfies
\begin{equation}\label{eq:small-base}
 D_F\ll_{m,\epsilon}D^\epsilon
 \qquad\text{for every }\epsilon>0.
\end{equation}
\end{proposition}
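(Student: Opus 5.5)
The plan is to compare two bounds for the relative class number $h^-=|\Cl(K)/\operatorname{im}\Cl(F)|$: an upper bound coming from the short elements supplied by Proposition~\ref{prop:reduction}, and a lower bound coming from the analytic class number formula and a Brauer--Siegel-type estimate.

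\emph{Step 1: small integral ideals in every relative class.} Fix a class $c$ of $\Cl(K)$. Choose $I$ with $I^{-1}$ in $c$, and take $u=1$. Proposition~\ref{prop:reduction} gives an element $\alpha\in I\cap W_I$ of normalized length $<\sqrt{2m}$. The integral ideal $\alpha I^{-1}$ lies in $c$, and its norm is $\N(\alpha)/\N(I)\ll_m\sqrt D$ by the normalization \eqref{eq:lattice}. This alone is too weak, since it bounds $h(K)$ only by the number of ideals of norm $\ll\sqrt D$, which is about $\sqrt D$. The restriction to the $F$-line is what improves it.

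Write $W_I=\alpha_1F$. Every short element is then $\alpha_1x$ with $x\in F$, so $\alpha_1^{-1}I\cap F$ is a fractional ideal $\mathfrak a$ of $F$ that spans $F$. I would choose the representative in the class of $I$ modulo $\operatorname{im}\Cl(F)$ more cleverly. Replace $I$ by $\alpha_1^{-1}I\,\mathfrak a^{-1}\OK$, multiplying by an $F$-ideal. The new ideal $J$ then satisfies $J\cap F=\OF$, with $1\in J$ of bounded normalized length after rescaling. This should yield an integral ideal $\mathfrak b=J^{-1}$ or $J$ in the relative class with $\N_{K/F}$-size controlled by $\sqrt{d}$, where $d=D_K/D_F^2$. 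Here the length of $1$ and the covolume of $J\cap F$, which is about $\sqrt{D_F}$, enter the computation.

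Concretely, I would aim to show that each class of $\Cl(K)/\operatorname{im}\Cl(F)$ contains an integral ideal $\mathfrak b$ with $\N(\mathfrak b)\ll_m\sqrt d$. I expect this normalization bookkeeping to be the main obstacle: the factor $D_F$ must be pulled out correctly using the $F$-structure, perhaps by working with the relative discriminant and the fact that $J\cap W_I$ is an $\OF$-module of rank one whose covolume is comparable to $\sqrt{D_F}\,\N(\cdot)$.

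\emph{Step 2: counting.} Ideals of $\OK$ with norm at most $X$ number $\ll_{n,\epsilon}X D_K^{\epsilon}$. If we insist that they represent distinct classes modulo $\operatorname{im}\Cl(F)$, we may divide out by ideals from $F$: each $\mathfrak b$ can be taken to be primitive, meaning not divisible by a nontrivial $F$-ideal. Then
\[
h^-\ll_{m,\epsilon}\sqrt d\,D^{\epsilon}.
\]

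\emph{Step 3: analytic lower bound.} Since $\OK^\times=\OF^\times$ by Proposition~\ref{prop:reduction}, the regulators satisfy $R_K=2^{m-1}R_F$, and the roots of unity are bounded. The class number formula then gives
\[
h^-\asymp_m\frac{\sqrt D\,\kappa_K}{\sqrt{D_F}\,\kappa_F}\cdot\frac{h_F}{|\ker(\Cl F\to\Cl K)|},
\]
and the kernel has size at most $2^{O(m)}$. Use Siegel/Brauer--Siegel in the form $\kappa_K\gg_{n,\epsilon}D^{-\epsilon}$, and the upper bound $\kappa_F\ll D_F^{\epsilon}$ for the residue of $\zeta_F$. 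This yields $h^-\gg D^{1/2-\epsilon}D_F^{-1/2}=\sqrt d\,D_F^{1/2}D^{-\epsilon}$.

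Comparing this with Step 2 gives $D_F^{1/2}\ll D^{2\epsilon}$, which is \eqref{eq:small-base}.
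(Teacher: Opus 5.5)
Your architecture matches the paper's. You play an upper bound $|\Cl(K)/\operatorname{im}\Cl(F)|\ll\sqrt d\,D^{\epsilon}$, from small integral representatives, against $h_K/h_F\gg\sqrt{D_Fd}\,D^{-\epsilon}$, from the class-number formula with $\OK^\times=\OF^\times$ and the Brauer--Siegel-type bounds for $\kappa_K,\kappa_F$. Steps~2 and~3 are fine. But Step~1 carries the whole content of the proposition: every relative class contains an integral ideal of norm $\ll_m\sqrt d$ rather than $\ll_m\sqrt D$. You leave this as ``this should yield,'' and the mechanism you hint at would not give it. The bounded length of a single element (your $1\in J$, i.e.\ $\alpha_1\in I$) only reproduces the trivial bound $\N(\alpha_1)/\N(I)\ll\sqrt D$, as you note yourself. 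What removes the factor $D_F$ is that Proposition~\ref{prop:reduction} supplies $m$ \emph{independent} short elements. All of them lie in the rank-$m$ lattice $I\cap W_I=\alpha_1\mathfrak a$, where $\mathfrak a=\alpha_1^{-1}I\cap F$. Hence this lattice has bounded covolume.

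Concretely, your $J^{-1}=\alpha_1\mathfrak a I^{-1}$ is integral and lies in the class $[I]^{-1}$ modulo $\operatorname{im}\Cl(F)$. The normalized lattice $t_Iu\sigma(\alpha_1\mathfrak a)$ has covolume $V=t_I^m\bigl(\prod_v u_v|\sigma_v(\alpha_1)|\bigr)\sqrt{D_F}\,\N(\mathfrak a)$ in its real span. Use $\prod_v u_v=1$, $t_I^{2m}=2^m/(\sqrt D\,\N(I))$, $\N(\mathfrak a\OK)=\N(\mathfrak a)^2$ and $\sqrt D=D_F\sqrt d$. This gives $V^2=2^m\N(J^{-1})/\sqrt d$. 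The $m$ independent vectors of length $<\sqrt{2m}$ span a finite-index sublattice, so Hadamard's inequality gives $V\le(2m)^{m/2}$. Hence $\N(J^{-1})\le m^m\sqrt d$, which is the missing bound.

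Do this computation inside $I$, where Proposition~\ref{prop:reduction} directly places the short vectors. Multiplication by the $F$-ideal $\mathfrak a^{-1}$ is not a rescaling. So working in $J$ would first require showing that $J\cap F=\OF$ contains $m$ short independent vectors for $J$'s own normalization.

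Finally, your displayed identity in Step~3 is wrong. Since $|\operatorname{im}\Cl(F)|=h_F/|\ker|$, one has $|\Cl(K)/\operatorname{im}\Cl(F)|=|\ker|\,h_K/h_F$, with no factor $h_F/|\ker|$. This is harmless: the trivial inequality $|\Cl(K)/\operatorname{im}\Cl(F)|\ge h_K/h_F$ is all that is needed, and no bound on the capitulation kernel is required.
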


\begin{proof}
Let $\mathfrak D_{K/F}$ be the relative discriminant ideal, and write
$d=\N(\mathfrak D_{K/F})$, so that $D=D_F^2d$.
Fix a nonzero fractional ideal $I$ of $K$, and let $W_I$ be the line
provided by Proposition~\ref{prop:reduction}.
Choose $0\ne \alpha\in I\cap W_I$, and put $J=(\alpha)I^{-1}$.
Since $\alpha\in I$, the ideal $J$ is integral.

We remove from $J$ the largest ideal extended from $F$ that divides it.
More precisely, define an integral ideal $I_0$ of $F$ by
\[
 v_P(I_0)=\min_{P_1\mid P}
       \left\lfloor\frac{v_{P_1}(J)}{e(P_1/P)}\right\rfloor
\]
for each prime ideal $P$ of $F$, where $e(P_1/P)$ is the ramification
index. We may then write $J=I_0\OK J_0$ with $J_0$ integral.
The fractional ideal $I_1=\alpha^{-1}I\cap F$ equals $I_0^{-1}$.
Indeed, $\alpha^{-1}I=J^{-1}$, and taking valuations gives
\[
 v_P(I_1)=\max_{P_1\mid P}
       \left\lceil-\frac{v_{P_1}(J)}{e(P_1/P)}\right\rceil
       =-v_P(I_0).
\]
Since $W_I=\alpha F$, it follows that $I\cap W_I=\alpha I_1$.

We compute the covolume $V$ of the rank-$m$ lattice
$t_Iu\sigma(\alpha I_1)$ in its real span. At each complex place of $K$,
the real line arising from $\alpha F$ has its length multiplied by
$t_Iu_v|\sigma_v(\alpha)|$. Consequently,
\[
 V=t_I^m
       \left(\prod_{v=1}^m u_v|\sigma_v(\alpha)|\right)
       \sqrt{D_F}\,\N(I_1).
\]
We have $\prod_vu_v=1$ because $K$ is totally complex.
Using also $\N(J)=\N(I_0)^2\N(J_0)$,
$I_1=I_0^{-1}$, $t_I^{2m}=2^m/(\sqrt D\,\N(I))$, and
$\sqrt D=D_F\sqrt d$, we obtain
\begin{equation}\label{eq:volume}
 V^2=2^m\frac{D_F}{\sqrt D}
         \frac{|\N(\alpha)|}{\N(I)}\N(I_1)^2
     =2^m\frac{\N(J_0)}{\sqrt d}.
\end{equation}
By Proposition~\ref{prop:reduction}, this lattice contains $m$
independent vectors of length less than $\sqrt{2m}$.
Their span over $\Z$ is a sublattice of finite index, so its covolume
is an integer multiple of $V$. Hadamard's inequality therefore gives
$V\leq(2m)^{m/2}$. It follows from \eqref{eq:volume} that
$\N(J_0)\leq m^m\sqrt d$.

Modulo classes extended from $F$, the equality
$J=(\alpha)I^{-1}=I_0\OK J_0$ gives $[J_0]=[I]^{-1}$.
As $I$ varies, we therefore obtain an integral representative of
norm at most $m^m\sqrt d$ for every class in
$\Cl(K)/\operatorname{im}\Cl(F)$.
For any number field of degree $n$, the number of integral ideals
of norm at most $x\geq1$ is at most $x(1+\log x)^{n-1}$.
To see this, bound the number of ideals of each norm by the
$n$-fold divisor function and sum over products of $n$ positive
integers at most $x$.
Writing $h_E$ for the class number of a field $E$, we conclude that
\begin{equation}\label{eq:class-count}
 \frac{h_K}{h_F}
 \leq\bigl|\Cl(K)/\operatorname{im}\Cl(F)\bigr|
 \ll_m\sqrt d\,(\log(2d))^{2m-1}.
\end{equation}
The first inequality uses only that the image of $\Cl(F)$ has order
at most $h_F$; injectivity of the extension map is not needed.

We now compare this estimate with the class-number formula.
Let $\kappa_E=\Res_{s=1}\zeta_E(s)$, and let $w_E$ and $R_E$ denote
the number of roots of unity and the regulator of $E$, respectively.
We use the regulator obtained by deleting one coordinate from the
unit logarithms $(d_v\log|\sigma_v\varepsilon|)_v$, with regulator one
when the unit rank is zero.
The equality $\OK^\times=\OF^\times$ implies
$w_K=w_F=2$ and $R_K=2^{m-1}R_F$.
If $\eta=\eta_{K/F}$ is the quadratic Hecke character associated with
$K/F$, then $\zeta_K(s)=\zeta_F(s)L_F(s,\eta)$.
The two class-number formulas consequently give
\[
 \frac{h_K}{h_F}
 =\frac{2}{(2\pi)^m}\sqrt{D_Fd}\,L_F(1,\eta),
 \qquad L_F(1,\eta)=\frac{\kappa_K}{\kappa_F}.
\]

We use the following Brauer--Siegel-type estimate:
for fields $E$ of degree at most $N$ and every $\epsilon>0$,
\begin{equation}\label{eq:residue}
 D_E^{-\epsilon}\ll_{N,\epsilon}\kappa_E
       \ll_{N,\epsilon}D_E^\epsilon.
\end{equation}
The lower bound does not require $E$ to be normal over $\Q$.
Indeed, Stark's residue bound \cite[Lemma~4, p.~140]{Stark} gives
$\kappa_E\gg_N\min\{1-\beta,(\log D_E)^{-1}\}$, where the first
term is omitted if there is no exceptional real zero $\beta$.
If $1-\beta\geq(4N!\log D_E)^{-1}$, this already suffices.
Otherwise Stark's Lemma~8 \cite[p.~145]{Stark}, which applies to non-normal fields as
well, places the same zero in a quadratic subfield $E_0\subset E$.
Siegel's theorem gives $1-\beta\gg_\epsilon D_{E_0}^{-\epsilon}$
after an arbitrarily small adjustment of $\epsilon$.
Since $D_{E_0}\leq D_E$, this proves the required lower bound.
The upper bound follows from the standard estimate
$\kappa_E\ll_N(\log D_E)^{N-1}$. The field $E=\Q$ may be handled
separately, since its residue is one.

Applying \eqref{eq:residue} to $K$ and $F$, with sufficiently small
exponents, gives
$L_F(1,\eta)\gg_{m,\epsilon}D^{-\epsilon/4}$.
Comparison with \eqref{eq:class-count} then yields
\[
 \sqrt{D_F}
 \ll_{m,\epsilon}D^{\epsilon/4}(\log(2D))^{2m-1}
 \ll_{m,\epsilon}D^{\epsilon/2}.
\]
Squaring proves \eqref{eq:small-base}.
\end{proof}

\section{Quadratic norm forms and short vectors}\label{sec:forms}

Let $K/F$ be a CM extension, with $[F:\Q]=m$ and $n=2m$.
Write $\bar z$ for the nontrivial automorphism of $K/F$.
The polynomial constructed in Section~\ref{sec:polynomial} is the
norm of a binary quadratic form over $F$. We first record this
interpretation and its coefficient bound.

\begin{lemma}\label{lem:quadratic-form}
Let $I$ be a nonzero fractional ideal of $K$, and let $\alpha,\beta\in I$ satisfy
$\alpha\ne0$ and $\Q(\beta/\alpha)=K$. If both elements have length at most $B$
in $L(I,u)$, then
\begin{equation}\label{eq:relative-form}
 Q(x,y)=\N_{K/F}(\alpha x-\beta y)
 =\alpha\bar \alpha x^2-(\alpha\bar \beta+\bar \alpha \beta)xy+\beta\bar \beta y^2
\end{equation}
belongs to $F[x,y]$ and is positive definite at every real place.
The polynomial
\begin{equation}\label{eq:form-polynomial}
 f(x)=\frac{\N_{F/\Q}(Q(x,1))}{\N(I)}
\end{equation}
is integral, has degree $n$ and root $\beta/\alpha$, and satisfies
$H(f)\leq(2B^2)^m\sqrt D$.
\end{lemma}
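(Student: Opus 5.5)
The plan is to reduce everything to Lemma~\ref{boundlemma} and the estimate \eqref{eq:norm}, after identifying $f$ with $\Phi_{I;\alpha,\beta}$.

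First, the coefficients of $Q$ are fixed by the automorphism $z\mapsto\bar z$, so they lie in $F$, and $Q$ lies in $F[x,y]$. Since $K/F$ is CM, every embedding $\tau$ of $F$ is real. The two embeddings of $K$ above $\tau$ are complex conjugates, say $\sigma_v$ and $\bar\sigma_v$. Then
\[
 \tau(Q(x,y))=|\sigma_v(\alpha)x-\sigma_v(\beta)y|^2 .
\]
This is a positive semidefinite real binary quadratic form. It vanishes at a nonzero real point only if $\sigma_v(\beta)/\sigma_v(\alpha)$ is real. That would mean $\sigma_v(\beta/\alpha)=\overline{\sigma_v(\beta/\alpha)}=\sigma_v(\overline{\beta/\alpha})$, so $\beta/\alpha\in F$, contradicting $\Q(\beta/\alpha)=K$. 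Hence $\tau(Q)$ is positive definite at every real place.

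Next, by transitivity of norms, $\N_{F/\Q}(Q(x,1))=\N_{K/F\to\Q}(\alpha x-\beta)=\N_{K/\Q}(\alpha x-\beta)$. Therefore $f=\Phi_{I;\alpha,\beta}$ in the notation of \eqref{eq:norm}, and integrality follows from Lemma~\ref{boundlemma}. The leading coefficient is $\N(\alpha)/\N(I)\neq0$, so $f$ has degree $n$, and clearly $f(\beta/\alpha)=0$.

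For the height, the bound $(2B)^n\sqrt D$ from \eqref{eq:norm} is too weak, since the claim is $(2B^2)^m\sqrt D=2^mB^n\sqrt D$. Instead I will apply the sharper inequality \eqref{eq:binary-bound}:
\[
 H(f)\le\frac{2^n}{\N(I)}\prod_v\max\{|\sigma_v\alpha|,|\sigma_v\beta|\}^2 .
\]
With the scaling, each factor becomes $(t_Iu_v)^{-2}\max\{\ldots\}^2$ with normalized entries at most $B$. The product of the scalings gives $t_I^{-n}/\N(I)=2^{-m}\sqrt D$. This yields
\[
 H(f)\le 2^n\cdot 2^{-m}B^n\sqrt D=2^mB^{n}\sqrt D,
\]
which is the claim.

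As a cross-check on the constant, I would also bound each conjugate form directly. Each real form $|a x-by|^2$ with $|a|,|b|\le b_v$ has coefficients bounded by $b_v^2, 2b_v^2, b_v^2$, i.e. coefficient sum at most $4b_v^2$. The product of $m$ such quadratics has height at most the product of the coefficient sums, giving $4^m\prod b_v^2$. This leads to the same $2^{n}\cdot2^{-m}B^n\sqrt D$.

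The only delicate step is this bookkeeping of the factor $2^{-r_2}=2^{-m}$ against $2^n$. A normalized length at most $B$ bounds $|t_Iu_v\sigma_v(\alpha)|\le B$ at each place, and everything else is routine.
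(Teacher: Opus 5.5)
Your proof is correct and follows essentially the same steps as the paper. Those steps are: coefficients lie in $F$ by conjugation-invariance; positive definiteness follows from $\tau(Q)=|\sigma_v(\alpha)x-\sigma_v(\beta)y|^2$ together with the fact that a real value of $\sigma_v(\beta/\alpha)$ is incompatible with $\Q(\beta/\alpha)=K$; and integrality, degree, and root follow from $f=\Phi_{I;\alpha,\beta}$. For the height, the paper uses exactly your cross-check, writing $f(x)=2^{-m}\sqrt D\prod_v Q_v(x,1)$ with each normalized $Q_v$ of coefficient sum at most $4B^2$, whereas your primary route through \eqref{eq:binary-bound} gives the same constant $2^mB^n\sqrt D$ because $2^n=4^m$.
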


\begin{proof}
The coefficients of $Q$ are fixed by conjugation and hence lie in $F$.
For a real place $v$ of $F$, choose an extension $\sigma_v$ to $K$.
Then $v(Q)$ is
$|\sigma_v(\alpha)x-\sigma_v(\beta)y|^2$, which is positive definite
because $\beta/\alpha$ generates $K$ and hence has no real conjugate.
Transitivity of the norm identifies $f$ with
$\Phi_{I;\alpha,\beta}$ of Section~\ref{sec:polynomial}, proving integrality,
degree, and the assertion about its root.

Put $z_v=t_Iu_v\sigma_v(\alpha)$ and $w_v=t_Iu_v\sigma_v(\beta)$.
Both have modulus at most $B$. The sum of the absolute values of
the coefficients of $Q_v(x,y)=|z_vx-w_vy|^2$ is therefore at most
$4B^2$. Since $\prod_vu_v^2=1$, we have
$f(x)=2^{-m}\sqrt D\prod_v Q_v(x,1)$, and the height bound follows.
\end{proof}

We therefore seek an ideal lattice $L(I,u)$ with $n$ independent
vectors of bounded length. It is enough to find one for which
$\lambda_1(L(I,u))\geq r_m$, where $r_m>0$ depends only on $m$.
Indeed, Minkowski's second theorem gives
\begin{equation}\label{eq:thick-minima}
 r_m^{n-1}\lambda_n(L(I,u))
 \leq\prod_{j=1}^n\lambda_j(L(I,u))
 \leq\frac{2^n}{\omega_n}.
\end{equation}
The resulting independent vectors give a generating ratio by
Lemma~\ref{lem:ratios}, and hence the required quadratic form.

To find such a lattice, we use a function that becomes large
when the lattice has a very short vector. For a Euclidean
lattice $L\subset\R^{2m}$ of covolume one, define
\[
 \theta_L(t)=\sum_{v\in L}e^{-\pi t\|v\|^2},\qquad
 Z_L(s)=\pi^{-ms}\Gamma(ms)
        \sum_{0\ne v\in L}\|v\|^{-2ms}\quad(\Re s>1).
\]
Here $\Gamma$ is the gamma function. The function $Z_L$ is the
completed Epstein zeta function with parameter scaled so that
its central point is $s=1/2$.
Khayutin's lower bound \cite[Corollary~2.8]{Khayutin} implies
the following assertion for a sufficiently small $r_m$.
We give the short estimate from the approximate functional
equation to retain the explicit radius below.

\begin{lemma}\label{lem:epstein-short}
Put
\[
 r_m=\left(1+\frac{(m+4)e^\pi}{4}\right)^{-1/m}.
\]
If $L\subset\R^{2m}$ has covolume one and
$\lambda_1(L)\leq r_m$, then $Z_L(1/2)\geq1$, where the
central value is defined by meromorphic continuation.
\end{lemma}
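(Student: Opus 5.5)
The plan is to use Riemann's splitting of the Mellin integral, which turns the central value of $Z_L$ into an explicit negative constant plus two integrals of theta functions, both nonnegative. A single short vector then makes one of these integrals large enough to exceed the constant by at least one.

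\emph{Step 1 (integral representation).} For $\Re s>1$, termwise integration of $\Gamma(ms)(\pi\|v\|^2)^{-ms}=\int_0^\infty e^{-\pi t\|v\|^2}t^{ms}\,dt/t$ gives
\[
 Z_L(s)=\int_0^\infty\bigl(\theta_L(t)-1\bigr)t^{ms}\,\frac{dt}{t}.
\]
Split the integral at $t=1$. On $(0,1)$, substitute $t\mapsto 1/t$ and use Poisson summation for a covolume-one lattice in $\R^{2m}$, namely $\theta_L(1/t)=t^{m}\theta_{L^*}(t)$. The constant terms $\int_0^1 t^{ms-1}\,dt=1/(ms)$ and $\int_1^\infty t^{m-m s-1}\,dt$-type contributions then separate off, and one obtains
\[
 Z_L(s)=-\frac1{ms}-\frac1{m(1-s)}
 +\int_1^\infty\bigl(\theta_L(t)-1\bigr)t^{ms}\,\frac{dt}{t}
 +\int_1^\infty\bigl(\theta_{L^*}(t)-1\bigr)t^{m(1-s)}\,\frac{dt}{t}.
\]
The two integrals converge for all $s$, because $\theta(t)-1$ decays exponentially as $t\to\infty$. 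Hence this formula gives the meromorphic continuation of $Z_L$. At $s=1/2$ the constant term is $-4/m$.

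\emph{Step 2 (positivity and the short vector).} Both integrands are nonnegative, so the dual integral may be discarded. In the first integral, keep only the two terms $\pm v$, where $\|v\|=\lambda=\lambda_1(L)\le r_m$. This gives
\[
 Z_L(1/2)\ \ge\ -\frac4m+2\int_1^\infty e^{-\pi t\lambda^2}t^{m/2-1}\,dt .
\]
Restrict the range of integration to $1\le t\le\lambda^{-2}$; this interval is nonempty since $r_m<1$. On this range $e^{-\pi t\lambda^2}\ge e^{-\pi}$, so the integral is at least
\[
 2e^{-\pi}\cdot\frac2m\bigl(\lambda^{-m}-1\bigr).
\]

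\emph{Step 3 (the constant).} The definition of $r_m$ is exactly that $\lambda\le r_m$ implies
\[
 \lambda^{-m}-1\ \ge\ \frac{(m+4)e^\pi}{4}.
\]
Substituting this into Step 2 gives
\[
 \frac{4}{m}e^{-\pi}\bigl(\lambda^{-m}-1\bigr)\ \ge\ \frac{m+4}{m}=1+\frac4m,
\]
and therefore $Z_L(1/2)\ge 1$.

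The only delicate point is the bookkeeping in Step 1. One must use the correct normalization of Poisson summation for covolume one in dimension $2m$. One must also get the constants $-1/(ms)$ and $-1/(m(1-s))$ with the factor $1/m$, since it is precisely this factor that makes the chosen $r_m$ work. Everything after that is an elementary lower bound.
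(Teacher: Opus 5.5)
Your proposal is correct and follows the paper's own argument. You split the Mellin integral to get the constant $-4/m$ at $s=1/2$, drop the dual integral by positivity, and keep only $\pm v$ on $1\le t\le\lambda^{-2}$ to obtain $\tfrac{4}{m}e^{-\pi}(\lambda^{-m}-1)\ge 1+\tfrac4m$. The only difference is that you derive the continuation formula directly from Poisson summation, where the paper cites Terras and Khayutin.
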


\begin{proof}
The approximate functional equation of Terras \cite{Terras},
obtained by Poisson summation, gives
\begin{align}
 Z_L(s)={}&-\frac1{ms}-\frac1{m(1-s)}\notag\\
 &+\int_1^\infty
 \bigl((\theta_L(t)-1)t^{ms}
       +(\theta_{L^*}(t)-1)t^{m(1-s)}\bigr)\,\frac{dt}{t};
 \label{eq:epstein-continuation}
\end{align}
see \cite[Theorem~2.3]{Khayutin}, with $L=\Z^{2m}g$,
parameter $2ms$, and $E^*(g,2ms)=Z_L(s)/2$.
The integral is entire in $s$, since the theta series minus one
decay exponentially. The formula gives both
$Z_L(s)=Z_{L^*}(1-s)$ and the reality of $Z_L(1/2)$.
If $v\in L\setminus\{0\}$ has length at most $r<1$, the terms
$v,-v$ contribute at least $2e^{-\pi}$ to $\theta_L(t)-1$
for $1\leq t\leq r^{-2}$. All remaining terms in the integral
at $s=1/2$ are nonnegative, so
\[
 Z_L(1/2)\geq-\frac4m+
       2e^{-\pi}\int_1^{r^{-2}}t^{m/2}\,\frac{dt}{t}
 =-\frac4m+\frac{4e^{-\pi}}m(r^{-m}-1).
\]
For $r=r_m$, the last expression equals $1$.
\end{proof}

In the next section we compute the average of $Z_{L(I,u)}(1/2)$
over ideal classes and scalings. We obtain a bound that tends
to zero for the CM fields remaining after Section~\ref{sec:base}.
For sufficiently large $D$, Lemma~\ref{lem:epstein-short} then
ensures that some lattice has no nonzero vector of length at most $r_m$.

\section{An average over ideal classes}\label{sec:uniform}

The two averages used here had distinct roles. Integrating
over the archimedean scalings converts a sum involving
Euclidean lengths into a sum involving field norms.
Averaging over ideal classes then combines the partial zeta
functions into $\zeta_K$. Thus it suffices to bound this single
Dedekind zeta function.

First, let $K$ be any totally complex field of degree $2m$, and
write $D=D_K$. Choose one embedding $\sigma_v$ from each complex
pair, and put
\[
 T=\{u\in\R_{>0}^m:\textstyle\prod_vu_v=1\},\qquad
 U_K=\{(|\sigma_v(\varepsilon)|)_v:\varepsilon\in\OK^\times\}.
\]
Dirichlet's unit theorem makes $T/U_K$ a compact group; let
$du$ be its Haar probability measure. The lattices
$L(I,u|\sigma(\varepsilon)|)$ and $L(I,u)$ differ by rotations,
so $Z_{L(I,u)}$ is a function on $T/U_K$.
Set
\begin{equation}\label{eq:average-definition}
 \mathcal A_K(s)=\frac1{h_K}\sum_{[I]\in\Cl(K)}
                  \int_{T/U_K}Z_{L(I,u)}(s)\,du .
\end{equation}
Changing $I$ to $aI$ replaces $u$ by the tuple
$u'_v=u_v|\sigma_v(a)|/|\N(a)|^{1/(2m)}$, up to rotations.
Since $u\mapsto u'$ is a translation of $T/U_K$, the average
is independent of the ideal representatives.

\begin{proposition}\label{prop:hecke-average}
For every totally complex field $K$ of degree $2m$, Hecke's
formula gives
\begin{equation}\label{eq:hecke-period}
 \mathcal A_K(s)=\frac{w_K}{m\,h_KR_K}\Lambda_K(s),\qquad
 \Lambda_K(s)=D^{s/2}\bigl((2\pi)^{-s}\Gamma(s)\bigr)^m\zeta_K(s)
\end{equation}
for $s\ne0,1$, with meromorphic continuation understood.
In particular,
\begin{equation}\label{eq:hecke-center}
 \mathcal A_K(1/2)
 =\frac{2^{m/2}\pi^m}{m}\,
       D^{-1/4}\frac{\zeta_K(1/2)}{\kappa_K},
 \qquad \kappa_K=\Res_{s=1}\zeta_K(s).
\end{equation}
\end{proposition}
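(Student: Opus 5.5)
We will prove \eqref{eq:hecke-period} for $\Re s>1$ by unfolding, and then obtain it for all $s\neq0,1$ by meromorphic continuation. Both sides continue meromorphically: the left side through \eqref{eq:epstein-continuation}, integrated over the compact set $T/U_K$, and the right side as the completed Dedekind zeta function.

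\textbf{Step 1: unfolding.} Fix a class $[I]$ and take $\Re s>1$. The nonzero vectors of $L(I,u)$ are $t_Iu\sigma(\alpha)$ with $0\ne\alpha\in I$. Group them into orbits under $\OK^\times$. The stabilizer of each $\alpha$ is the group $\mu_K$ of roots of unity, and $\mu_K$ is exactly the kernel of $\varepsilon\mapsto(|\sigma_v\varepsilon|)_v$. Unfolding the integral over $T/U_K$ against the sum over a unit orbit therefore gives
\[
 \int_{T/U_K}\sum_{0\ne\alpha\in I}\|t_Iu\sigma(\alpha)\|^{-2ms}\,du
 =\frac{w_K}{\operatorname{vol}(T/U_K)}\sum_{\alpha\in(I\setminus0)/\OK^\times}\int_T\Bigl(\sum_v t_I^2u_v^2|\sigma_v\alpha|^2\Bigr)^{-ms}du,
\]
where $du$ on $T$ is the Haar measure normalized compatibly.

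\textbf{Step 2: the archimedean integral (Hecke's lemma).} Substitute $x_v=u_v^2|\sigma_v\alpha|^2$. Then $\prod_v x_v=|\N(\alpha)|$, since $K$ is totally complex and $\N(\alpha)=\prod_v|\sigma_v\alpha|^2$. We need Hecke's identity
\[
 \int_{\prod x_v=P}\Bigl(\sum_v x_v\Bigr)^{-ms}d^\times x
 =c\,\frac{\Gamma(s)^m}{\Gamma(ms)}P^{-s}.
\]
To prove it, write $\Gamma(ms)(\sum x_v)^{-ms}=\int_0^\infty e^{-y\sum x_v}y^{ms}\,dy/y$. Integrating over all of $\R_{>0}^m$ against $\prod_v x_v^{s}\,dx_v/x_v$ gives $\Gamma(s)^m$, and separating out the radial variable $P$ gives the restricted version. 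The constant $c$ comes from comparing Haar measures on $T$ and on the hyperplane $\sum\log x_v=\log P$ in log-coordinates. The Gamma factors then produce $\pi^{-ms}\Gamma(ms)\cdot\Gamma(ms)^{-1}\Gamma(s)^m$, and $t_I^{-2ms}=(2^{-m}\sqrt D\,\N(I))^{s}$.

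\textbf{Step 3: from partial to full zeta.} Use the bijection between $(I\setminus 0)/\OK^\times$ and the integral ideals $J=(\alpha)I^{-1}$ in the class $[I]^{-1}$, under which $|\N(\alpha)|=\N(J)\N(I)$. The factors $\N(I)^{\pm s}$ cancel, leaving $D^{s/2}(2\pi)^{-ms}\Gamma(s)^m\zeta_{[I]^{-1}}(s)$ up to a constant. Averaging over the $h_K$ classes turns the partial zeta functions into $\zeta_K(s)/h_K$. For the constant, note that $\operatorname{vol}(T/U_K)$ is $R_K$ up to a power of $2$ and a factor $\sqrt m$ coming from the coordinate normalization. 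Together with $c$ this should collapse to $w_K/(mR_K)$.

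\textbf{Step 4: the central value.} Apply the class number formula $\kappa_K=(2\pi)^mh_KR_K/(w_K\sqrt D)$ for totally complex $K$ to rewrite $w_K/(h_KR_K)$. Setting $s=1/2$ and using $\Gamma(1/2)=\sqrt\pi$ gives
\[
 \frac{(2\pi)^m}{m\sqrt D\,\kappa_K}\,D^{1/4}(2\pi)^{-m/2}\pi^{m/2}\zeta_K(1/2)=\frac{2^{m/2}\pi^m}{m}D^{-1/4}\frac{\zeta_K(1/2)}{\kappa_K},
\]
which is \eqref{eq:hecke-center}.

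\textbf{Main obstacle.} The hard part is the exact bookkeeping of the measure normalizations in Steps 2 and 3, which must agree with the regulator convention fixed in Section~\ref{sec:base}. These are the factors $d_v=2$ in the unit logarithms, the Haar probability measure on $T/U_K$ versus Lebesgue measure in log-coordinates, and the $w_K$ coming from stabilizers. I would fix them by checking the case $m=1$, where $K$ is imaginary quadratic and $T$ is a point. There the formula should reduce to the classical identity $\sum_{[I]}\sum_{\alpha}|\alpha|^{-2s}$ relating quadratic forms to $\zeta_K$, which gives the factor $w_K/h_K$.
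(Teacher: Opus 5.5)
\textbf{Gap: the constant is never computed.} Your route is sound: unfold the unit orbits, evaluate the archimedean integral by Hecke's Gamma-function trick, then apply the class-number formula. It is a self-contained version of what the paper does by quoting Khayutin's Theorem~3.4 and matching normalizations. But the proposition is an exact identity. Its content beyond ``$\mathcal A_K(s)$ is proportional to $\Lambda_K(s)$'' is the constant $w_K/(m\,h_KR_K)$. You leave both $c$ and $\operatorname{vol}(T/U_K)$ undetermined and say they ``should collapse'', so \eqref{eq:hecke-period} is not actually proved.

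Your proposed calibration at $m=1$ cannot repair this. Every factor you flagged as uncertain equals $1$ when $m=1$: the $2^{m-1}$ from the convention $2\log|\sigma_v\varepsilon|$ in $R_K$, the $\sqrt m$ from the hyperplane measure, and the $1/m$ from the radial Jacobian. The imaginary quadratic case is therefore blind to exactly the ambiguities in question.

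A smaller point in Step~1: the stabilizer of $\alpha$ in $\OK^\times$ is trivial. The factor $w_K$ arises because the orbit sum depends on $\varepsilon$ only through its image in $U_K$, and $\mu_K$ is the kernel of that map. Your displayed formula is nonetheless correct.

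\textbf{Direct fix.} Write $u_v=e^{y_v}$ and use the Haar measure $dy_1\cdots dy_{m-1}$ on $T$; then $\operatorname{vol}(T/U_K)=2^{1-m}R_K$. In Step~2, the substitution $z_v=\log x_v=2y_v+\log|\sigma_v\alpha|^2$ contributes another factor $2^{1-m}$. Writing $\tau=\sum_v z_v=m\log y$ gives $dy/y=d\tau/m$, whence
\[
\int_{\sum_v z_v=\log P}\Bigl(\sum_v e^{z_v}\Bigr)^{-ms}dz_1\cdots dz_{m-1}=\frac{\Gamma(s)^m}{m\,\Gamma(ms)}\,P^{-s}.
\]
The two powers of $2$ cancel. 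Any other normalization of Haar measure on $T$, such as the induced Euclidean one that introduces $\sqrt m$, rescales $c$ and $\operatorname{vol}(T/U_K)$ by the same factor. One is left with $w_K/(mR_K)$ per class. Finally, $t_I^{-2ms}\N(I)^{-s}=2^{-ms}D^{s/2}$ turns $\pi^{-ms}$ into $(2\pi)^{-ms}$.

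\textbf{Quicker fix.} Your Steps~1--3 already show $\mathcal A_K(s)=C\,\Lambda_K(s)$ with $C$ independent of $s$. By \eqref{eq:epstein-continuation}, integrated against a probability measure, $\mathcal A_K$ has residue exactly $1/m$ at $s=1$. By the class-number formula, $C\Lambda_K$ has residue $C\sqrt D\,(2\pi)^{-m}\kappa_K=C\,h_KR_K/w_K$. Hence $C=w_K/(m\,h_KR_K)$. With the constant in hand, your Step~4 is correct.
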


\begin{proof}
We apply \cite[Theorem~3.4]{Khayutin}, a form of Hecke's
formula \cite{Hecke}. The following observations identify its
normalizations with ours.
\begin{itemize}
\item By Definitions~2.1--2.2 and Lemma~3.5 of \cite{Khayutin},
the Euclidean structure is the same as ours, and
$E^*(g,2ms)=Z_L(s)/2$ when $L=\Z^{2m}g$ has covolume one.
\item The compact group
$[H]=\OK^\times\backslash((\C^\times)^m/\R^\times)$ in
Definitions~3.1--3.2 maps onto $T/U_K$ by
$h\mapsto(|h_v|/(\prod_w|h_w|)^{1/m})_v$.
Its kernel consists of rotations modulo units.
This map carries Haar probability measure to $du$, and the
corresponding normalized ideal lattices are isometric.
\item The completed partial zeta function in Definition~3.3 is
\[
 \zeta_I^*(s)=D^{s/2}
       \bigl((2\pi)^{-s}\Gamma(s)\bigr)^m
       \zeta_K(s,[I]^{-1}),
\]
where $\zeta_K(s,c)$ sums $\N(J)^{-s}$ over the integral ideals
$J$ in the class $c$. Indeed, $\alpha\mapsto(\alpha)I^{-1}$
identifies nonzero elements of $I$ modulo units with these
ideals.\footnote{The sum in \cite[Definition~3.3]{Khayutin}
is understood modulo units, as in the proof of \cite[Theorem~3.4]{Khayutin}.}
\item The regulator uses $2\log|\sigma_v(\varepsilon)|$ at
complex places, as in the proof of
\cite[Proposition~4.1]{Khayutin}, and thus agrees with $R_K$.
\end{itemize}
The cited theorem therefore gives
\[
 \int_{T/U_K}Z_{L(I,u)}(s)\,du
 =2\frac{w_K}{2mR_K}\zeta_I^*(s).
\]
Averaging over the ideal classes proves \eqref{eq:hecke-period},
including its continuation to $s\ne0,1$.
The class-number formula
$\kappa_K=(2\pi)^m h_KR_K/(w_K\sqrt D)$ and
$\Gamma(1/2)=\sqrt\pi$ give \eqref{eq:hecke-center}.
\end{proof}

We now return to a CM quadratic extension $K/F$ with $[F:\Q]=m$.
Let $\eta=\eta_{K/F}$ be its quadratic Hecke character and put
$d=\N(\mathfrak D_{K/F})$, so that $D=D_F^2d$.
The character has finite conductor $\mathfrak D_{K/F}$ and
equals the sign character at every real place of $F$.
Its $L$-function satisfies
$\zeta_K(s)=\zeta_F(s)L_F(s,\eta)$.

\begin{lemma}\label{lem:uniform-subconvexity}
There are $a\geq1/2$ and $0<\delta<1/4$, depending only on $m$,
such that
\begin{equation}\label{eq:zeta-subconvexity}
 |\zeta_K(1/2)|\ll_m D_F^a d^{1/4-\delta}.
\end{equation}
\end{lemma}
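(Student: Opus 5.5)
We split $\zeta_K(1/2)=\zeta_F(1/2)L_F(1/2,\eta)$ and bound the two factors separately. The base factor $\zeta_F(1/2)$ only needs the convexity bound. The Phragm\'en--Lindel\"of principle, applied to $(s-1)\zeta_F(s)$ on the strip $-\epsilon\leq\Re s\leq1+\epsilon$ with the functional equation, gives $|\zeta_F(1/2)|\ll_{m,\epsilon}D_F^{1/4+\epsilon}$. The pole at $s=1$ is harmless, since $|1/2-1|$ is bounded below. A crude polynomial bound in $D_F$ is all we need, because the exponent $a$ is allowed to be any fixed constant. Taking $\epsilon=1/4$ gives $|\zeta_F(1/2)|\ll_m D_F^{1/2}$.

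For the twisted factor we invoke Michel--Venkatesh \cite[Theorem~1.2]{MV}. Over a fixed number field $F$, this gives subconvexity in the conductor aspect for automorphic $L$-functions on $\mathrm{GL}_1$ and $\mathrm{GL}_2$. We regard $L_F(s,\eta)$ as a $\mathrm{GL}_1$ Hecke $L$-function of the quadratic character $\eta$. Its analytic conductor at $s=1/2$ is $\asymp_m D_F\,d$. The archimedean parameters are fixed, since $\eta$ is the sign character at every real place. The finite conductor is $\mathfrak D_{K/F}$, whose norm is $d$. Michel--Venkatesh then give
\[
 |L_F(1/2,\eta)|\ll_F d^{1/4-\delta_0}
\]
for an absolute $\delta_0>0$. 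Here the $1/4$ comes from convexity for the conductor $d$ of a degree-one $L$-function over $F$.

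The key point is uniformity in $F$. Michel and Venkatesh state that the implied constant depends on $F$ at most polynomially in $D_F$ once $[F:\Q]$ is fixed. This is also recalled in the introduction. So there is some $a_0=a_0(m)$ with $|L_F(1/2,\eta)|\ll_m D_F^{a_0}d^{1/4-\delta_0}$. Multiplying the two factors gives \eqref{eq:zeta-subconvexity} with $a=\max\{1/2,\,1/2+a_0\}$ and $\delta=\min\{\delta_0,1/8\}$. Shrinking $\delta$ only weakens the bound, which ensures $0<\delta<1/4$, and enlarging $a$ likewise ensures $a\geq1/2$.

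The main obstacle is justifying this polynomial dependence on $D_F$ carefully. The theorem is usually stated with $F$ fixed, so one has to trace the dependence through their proof. The period identity, the Sobolev norms, and the spectral gap over $F$ are all uniform in $m$. The remaining dependence on $F$ enters through the covolume of $\mathrm{PGL}_2(F)\backslash\mathrm{PGL}_2(\mathbb A_F)$ and comparable quantities, and these are polynomial in $D_F$. Failing a clean reference, I would fall back on the remark in \cite{MV} that all implied constants depend polynomially on the discriminant of the base field for fixed degree. I would quote it explicitly and make no attempt to optimize $a$.
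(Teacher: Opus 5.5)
There is a gap at exactly the point you flagged, and your decision to split off $\zeta_F(1/2)$ is what creates it. The uniformity statement of Michel--Venkatesh (the sentence after their display (1.1): the constant is polynomial in $D_F$ and in $C(\pi_2)$) belongs to their Theorem~1.2. That theorem is a Rankin--Selberg bound $L(\pi_1\otimes\pi_2,1/2)\ll C(\pi_1\otimes\pi_2)^{1/4-\delta}$ for a fixed $\pi_2$ and a varying cuspidal or nonsingular Eisenstein $\pi_1$. It is not a $\mathrm{GL}_1$ statement, and you never explain how to get from it a bound for $L_F(1/2,\eta)$ that is polynomial in $D_F$. For a quadratic character this is not automatic:
\begin{itemize}
\item The natural choice $\pi_1=1\boxplus\eta$ produces factors $\zeta_F(s\pm iv)L_F(s\pm iv,\eta)$. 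Isolating $L_F(1/2,\eta)$ then means dividing by $|\zeta_F|$ at or near $1/2$. But $\zeta_F(1/2)$ is not known to be nonzero in general (already for real quadratic $F$ this is the open nonvanishing problem for $L(1/2,\chi_{D_F})$), let alone bounded below by a negative power of $D_F$.
\item Producing a pure power of $L_F(s,\eta)$ would need $\pi_1=\eta\boxplus\eta$, whose two characters coincide, so it falls outside the nonsingular case. You would have to take a limit in the varying representation and check that the constant survives it.
\end{itemize}
Your remarks about covolumes, Sobolev norms and spectral gaps address neither issue. The convexity bound for $\zeta_F(1/2)$ and the bookkeeping for $a$ and $\delta$ are fine.

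The fix is not to split at all, which is what the paper does. Take $\pi_1=1\boxplus\eta$, which is nonsingular because $\eta\neq1$, and $\pi_2=|\cdot|^{iv}\boxplus|\cdot|^{-iv}$ with $v\neq0$ small. Then
\[
L(\pi_1\otimes\pi_2,1/2)=\zeta_K(1/2+iv)\zeta_K(1/2-iv),
\]
which is exactly the quantity you want. Since $\pi_2$ is unramified with bounded conductor, the stated polynomial dependence on $D_F$ and $C(\pi_2)$ applies as written, and the finite conductor of the product has norm $d^2$. This gives $|\zeta_K(1/2+iv)|^2\ll_m D_F^b d^{1/2-2\delta}$ uniformly for $0<|v|\leq 1$. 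Letting $v\to0$ and taking square roots proves the lemma. The only limit is taken in the fixed representation $\pi_2$, where it is controlled through $C(\pi_2)$, and no separate bound for $\zeta_F(1/2)$ is needed.
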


\begin{proof}
Apply Michel--Venkatesh \cite[Theorem~1.2]{MV} to the
Eisenstein representations
$\pi_1=1\boxplus\eta$ and
$\pi_2=|\cdot|^{iv}\boxplus|\cdot|^{-iv}$, with $v\ne0$
tending to zero. Here $\boxplus$ denotes the representation
induced from the displayed pair of Hecke characters; its
$L$-function is the product of their $L$-functions.
Both representations have unitary central character.
Since $\eta\ne1$, $\pi_1$ is nonsingular, and
\[
 L(\pi_1\otimes\pi_2,1/2)
   =\zeta_K(1/2+iv)\zeta_K(1/2-iv).
\]
The finite conductor of this product has norm $d^2$;
the archimedean parameters are bounded for $|v|\leq1$.
The second representation is unramified at every finite place.
The polynomial dependence stated after (1.1) of \cite{MV}
therefore gives, uniformly for such $v$,
\[
 |\zeta_K(1/2+iv)\zeta_K(1/2-iv)|
       \ll_m D_F^b d^{1/2-2\delta}
\]
for some $b\geq0$.\footnote{Immediately after (1.1), they write:
\textquotedblleft more precisely, the constant implied depends polynomially on the
discriminant of $F$ (for $F$ varying over fields of given degree)
and on $C(\pi_2)$.\textquotedblright\ Their
\S\S2.1.1 and~4.1.5 explain the convention for polynomial
implicit constants. Any base-discriminant factor included in
the analytic conductor is absorbed into $D_F^b$.}
Letting $v\to0$ and taking a square root proves
\eqref{eq:zeta-subconvexity}; we may enlarge $a$ to ensure $a\geq1/2$.
\end{proof}

\begin{proposition}\label{prop:small-average}
There are $A\geq0$ and $\delta>0$, depending only on $m$, such
that for every $\epsilon>0$,
\begin{equation}\label{eq:small-average}
 |\mathcal A_K(1/2)|
       \ll_{m,\epsilon}D_F^A D^{-\delta+\epsilon}.
\end{equation}
The implied constant may be ineffective.
\end{proposition}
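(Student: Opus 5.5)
The plan is to start from the exact evaluation \eqref{eq:hecke-center} of Proposition~\ref{prop:hecke-average}, which applies since $K$ is totally complex:
\[
 |\mathcal A_K(1/2)|=\frac{2^{m/2}\pi^m}{m}\,D^{-1/4}\frac{|\zeta_K(1/2)|}{\kappa_K}.
\]
Each factor is then bounded separately: the numerator by the subconvexity bound, the residue by the Brauer--Siegel-type estimate.

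\textbf{Numerator.} Lemma~\ref{lem:uniform-subconvexity} gives $|\zeta_K(1/2)|\ll_m D_F^a d^{1/4-\delta_0}$ with $a\geq 1/2$ and $0<\delta_0<1/4$ depending only on $m$. The task is to rewrite $d$ in terms of $D$ via $d=D/D_F^2$:
\[
 d^{1/4-\delta_0}=D^{1/4-\delta_0}D_F^{-1/2+2\delta_0}\leq D^{1/4-\delta_0}.
\]
The last step uses $D_F\geq 1$ and $-1/2+2\delta_0<0$. Multiplying by $D^{-1/4}$ then gives $D^{-1/4}|\zeta_K(1/2)|\ll_m D_F^{a}D^{-\delta_0}$. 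So I would take $A=a$ and $\delta=\delta_0$, both depending only on $m$. Alternatively I could keep the factor $D_F^{-1/2+2\delta_0}$; it only improves the bound, and the hypothesis $a\geq 1/2$ guarantees a nonnegative exponent if I prefer to write the result as $D_F^{a-1/2+2\delta_0}$.

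\textbf{Residue.} For $\kappa_K^{-1}$ I would apply the lower bound in \eqref{eq:residue} with $E=K$ and $N=2m$. This gives $\kappa_K^{-1}\ll_{m,\epsilon}D^{\epsilon}$ for every $\epsilon>0$.

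\textbf{Conclusion.} Combining the two bounds yields $|\mathcal A_K(1/2)|\ll_{m,\epsilon}D_F^AD^{-\delta+\epsilon}$, which is \eqref{eq:small-average}. The ineffectivity of the implied constant enters only through the residue lower bound, because that bound rests on Siegel's theorem for a possible exceptional zero in a quadratic subfield. I expect no step here to be a real obstacle, since the substantive inputs (Hecke's formula and uniform subconvexity) are already packaged in the earlier results. The only point needing care is keeping the exponents $A$ and $\delta$ independent of $\epsilon$ and of $K$. For that I must confirm that Lemma~\ref{lem:uniform-subconvexity} is uniform over all CM extensions of degree $2m$, which holds because $a$ and $\delta_0$ depend only on $m$.
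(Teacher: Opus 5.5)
Your proposal is correct and matches the paper's proof: combine \eqref{eq:hecke-center} with Lemma~\ref{lem:uniform-subconvexity} and the residue lower bound \eqref{eq:residue}, then substitute $d=D/D_F^2$. The paper keeps the factor $D_F^{-1/2+2\delta}$ and takes $A=a-1/2+2\delta$, while your first choice drops it (valid since $\delta<1/4$ and $D_F\geq1$) and takes $A=a$; either works for the statement as written.
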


\begin{proof}
Combine \eqref{eq:hecke-center}, \eqref{eq:zeta-subconvexity},
and the residue bound \eqref{eq:residue} to obtain
\[
 |\mathcal A_K(1/2)|
 \ll_{m,\epsilon} D^{-1/4+\epsilon}D_F^a d^{1/4-\delta}
 =D_F^{a-1/2+2\delta}D^{-\delta+\epsilon}.
\]
This gives the assertion with $A=a-1/2+2\delta\geq0$.
\end{proof}

\section{Proof of Theorem~\ref{thm:main}}\label{sec:conclusion}

As noted in Section~\ref{sec:polynomial}, the case $n=1$ is immediate.
For $n\geq2$, fields with a defining polynomial of height at most
$C_0(n)\sqrt D$ already satisfy the theorem.
For every remaining field, Propositions~\ref{prop:reduction}
and~\ref{prop:small-base} imply that $n=2m$, that $K$ is CM with
maximal totally real subfield $F$, and that
$D_F\ll_{m,\gamma}D^\gamma$ for every $\gamma>0$.

Take $\epsilon=\delta/4$ and
$\gamma=\delta/(4(A+1))$ in Proposition~\ref{prop:small-average}
and Proposition~\ref{prop:small-base}, respectively. Then
\[
 |\mathcal A_K(1/2)|\ll_m D^{-\delta/2}.
\]
For $D$ sufficiently large in terms of $m$, this is less than $1$.
If every lattice in the average \eqref{eq:average-definition}
had a nonzero vector of length at most $r_m$,
Lemma~\ref{lem:epstein-short} would give
$\mathcal A_K(1/2)\geq1$. Therefore some ideal $I$ and some
scaling $u$ satisfy $\lambda_1(L(I,u))>r_m$.

By \eqref{eq:thick-minima}, this lattice has $n$ independent
vectors of bounded length. The corresponding elements
$\alpha_1,\ldots,\alpha_n\in I$ have ratios to $\alpha_1$
spanning $K$ over $\Q$. Lemma~\ref{lem:ratios} therefore supplies
$\alpha,\beta\in I$, both of bounded normalized length, such
that $\beta/\alpha$ generates $K$.
Lemma~\ref{lem:quadratic-form} now gives the desired polynomial
\[
 f(x)=\frac{\N_{F/\Q}(Q(x,1))}{\N(I)},
 \qquad Q(x,y)=\N_{K/F}(\alpha x-\beta y),
 \qquad H(f)\ll_n\sqrt D.
\]
All constants here depend only on $n$.
Finally, Hermite's theorem leaves only finitely many degree-$n$
fields below the discriminant threshold. Choosing a defining
polynomial for each and enlarging the constant absorbs these
fields and completes the proof.

\subsection*{Acknowledgments}

We are very grateful to Akshay Venkatesh for many helpful conversations and, in particular, for explaining to us the results in~\cite{MV} and \cite{Venkatesh}.  The first-named author 
was supported by the National Science Foundation (Grant DMS-1001828).

\end{document}